\newtheorem{thm}{Theorem}[section]
\newtheorem{coro}[thm]{Corollary}
\newtheorem{prop}[thm]{Proposition}
\newtheorem{lem}[thm]{Lemma}
\theoremstyle{definition}
\newtheorem{defi}[thm]{Definition}
\newtheorem{rmk}[thm]{Remark}
\newtheorem{ex}[thm]{Example}
\newtheorem*{notation*}{Notation}
\newcommand{\N}{\mathbb N}
\newcommand{\R}{\mathbb R}
\newcommand{\T}{\mathcal{T}(S)}
\newcommand{\curr}{\mathrm{Curr}(S)}
\newcommand{\ext}{\mathrm{Ext}_{X}}
\newcommand{\sqext}{\sqrt{\mathrm{Ext}_{X}}}
\title{On the extremal length of the hyperbolic metric}
\author{Hidetoshi Masai}
\address{Humanities and Sciences/Museum Careers, Musashino Art University
Bldg. 12, 1-736 Ogawa-cho, Kodaira-shi, Tokyo 187-8505}
\email{hmasai@musabi.ac.jp}
\thanks{The work of the author was partially supported by JSPS KAKENHI Grant Number 23K03085.}
\subjclass[2020]{Primary 30F30; Secondary 30C70, 30F45, 32G15}
\begin{document}
\begin{abstract}
For any closed hyperbolic Riemann surface \(X\), we show that the extremal length of the Liouville current is determined solely by the topology of \(X\).
This confirms a conjecture of Mart\'inez-Granado and Thurston.
We also obtain an upper bound, depending only on \(X\), for the diameter of extremal metrics on \( X \) with area one.
\end{abstract}
\maketitle
\section{Introduction}
For an orientable closed surface \( S \) of genus \( g \geq 2 \), 
there is a correspondence between its hyperbolic structures and Riemann surface structures. As a result, the Teichm\"uller space \( \T \) can be viewed either as the space of marked hyperbolic surfaces or marked Riemann surfaces. For any closed curve \( \gamma \subset S \), one can consider both its hyperbolic length and extremal length, with the latter offering a natural notion of length in the context of Riemann surfaces.

Bonahon introduced the notion of geodesic currents \cite{Bonahon}.
By assigning the so-called Liouville current $L_{X}$ to each point of $X\in\T$, Teichm\"uller space $\T$ naturally embeds into the space of geodesic currents, which we denote by $\curr$.
In this paper, a {\em multi-curve} means a family of (simple or non-simple) closed curves.
Every multi-curve naturally corresponds to a current, and hence we may consider weights on it.
Let us summarize the work of Bonahon.
\begin{thm}[Bonahon \cite{Bonahon}]\label{thm.Bonahon}
The following statements hold.
\begin{enumerate}
    \item The set of weighted multi-curves is dense in \( \curr \).
    \item The geometric intersection number $i(\cdot, \cdot)$ of closed curves extends continuously to \( \curr\times \curr \).
    \item For a closed curve \( \gamma \) and a Liouville current \( L_{X} \), we have  
          \(
          i(L_{X}, \gamma) = \ell_{X}(\gamma),
          \)  
          where \( \ell_{X}(\gamma) \) is the hyperbolic length of \( \gamma \).
      \item $i(L_{X},L_{X})=\pi\mathrm{Area}(X)/2$ for any $X\in\T$, 
      where $\mathrm{Area}(X)=2\pi|\chi(S)|$ is the hyperbolic area of $X$, and $\chi(S)$ is the Euler characteristic of $S$ which depends only on the topology of $S$.
\end{enumerate}
\end{thm}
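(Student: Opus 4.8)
The plan is to compute $i(L_X,L_X)$ directly from the definition as a total mass, by a single change of variables on the space of pairs of crossing geodesics. Let $\widetilde X$ be the universal cover with its hyperbolic metric, identified with the hyperbolic plane $\mathbb{H}^2$, and let $\mathcal{G}$ be the space of (complete, unoriented) geodesics of $\widetilde X$, so that $\widetilde L_X$ is a $\pi_1(S)$-invariant Radon measure on $\mathcal{G}$, normalized by Theorem~\ref{thm.Bonahon}(3). Writing $\mathcal{DG}\subset\mathcal{G}\times\mathcal{G}$ for the open, $\pi_1(S)$-invariant set of ordered pairs of geodesics that cross transversally (at a single point), the intersection number is
\[
i(L_X,L_X)=\bigl(\widetilde L_X\times\widetilde L_X\bigr)\bigl(\mathcal{DG}/\pi_1(S)\bigr),
\]
where $\pi_1(S)$ acts diagonally; equivalently, inserting a $\pi_1(S)$-fundamental-domain weight $\psi(p)$ in the intersection point $p$, it is the integral of $\psi$ against $\widetilde L_X\times\widetilde L_X|_{\mathcal{DG}}$.

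First I would parametrize $\mathcal{DG}$ by the triple $(p,u_1,u_2)$, where $p\in\mathbb{H}^2$ is the intersection point and $u_i\in\mathbb{P}(T_p\mathbb{H}^2)$ is the tangent line of the $i$th geodesic at $p$; this is a diffeomorphism onto $\{(p,u_1,u_2):u_1\neq u_2\}$. The key step is the identity
\begin{equation}\label{eq.keydisint}
\widetilde L_X\times\widetilde L_X\big|_{\mathcal{DG}}
=\tfrac14\,\bigl|\sin\angle(u_1,u_2)\bigr|\;dA(p)\,d\mu(u_1)\,d\mu(u_2),
\end{equation}
where $dA$ is the hyperbolic area, $d\mu$ is the angle measure on each $\mathbb{P}(T_p\mathbb{H}^2)\cong\mathbb{R}/\pi\mathbb{Z}$ (total mass $\pi$), and $\angle(u_1,u_2)$ is the angle between the two lines. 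I would prove \eqref{eq.keydisint} by Fubini on $\mathcal{G}\times\mathcal{G}$: fixing the second geodesic $g_2$ and parametrizing a transverse geodesic $g_1$ by the arclength $t$ of the crossing point along $g_2$ together with the crossing angle $\phi\in(0,\pi)$, the hyperbolic Cauchy--Crofton (Santal\'o) formula identifies $\widetilde L_X$ near $g_2$ with $\tfrac12\sin\phi\,dt\,d\phi$; integrating out the angle (which corresponds to $u_1$) and then recognizing the remaining integral $\int_{\mathcal{G}}\int_{g_2}(\,\cdot\,)\,dt\,d\widetilde L_X(g_2)$ as integration against $\tfrac12\,dA\otimes d\mu$ on the incidence space $\{(p,u_2)\}$ produces \eqref{eq.keydisint}. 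The two factors $\tfrac12$ are precisely the normalization of $\widetilde L_X$ forced by Theorem~\ref{thm.Bonahon}(3): running the same disintegration for $i(L_X,\gamma)$ with $\gamma$ a simple closed geodesic yields $\tfrac12\,\ell_X(\gamma)\int_0^\pi\sin\phi\,d\phi=\ell_X(\gamma)$, which both pins the constant and serves as an independent check.

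Then, since $\pi_1(S)$ acts on $\mathcal{DG}$ through the free isometric diagonal action on $\mathbb{H}^2$ — which preserves $p$, the lines $u_i$, and hence the density in \eqref{eq.keydisint} — taking the quotient amounts to restricting $p$ to a fundamental domain, with no stabilizer corrections. Hence
\[
i(L_X,L_X)=\tfrac14\int_{X}\left(\int_{0}^{\pi}\!\!\int_{0}^{\pi}\bigl|\sin(\theta_1-\theta_2)\bigr|\,d\theta_1\,d\theta_2\right)dA
=\tfrac14\cdot 2\pi\cdot\mathrm{Area}(X)=\frac{\pi}{2}\mathrm{Area}(X),
\]
using that $\int_0^\pi|\sin(\theta_1-\theta_2)|\,d\theta_1=2$ for every $\theta_2$ by $\pi$-periodicity of $|\sin|$; Gauss--Bonnet then rewrites $\mathrm{Area}(X)=2\pi|\chi(S)|$, which depends only on the topology of $S$.

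The main obstacle is \eqref{eq.keydisint}, and inside it the constant $\tfrac14$: one must be scrupulous about the oriented-versus-unoriented conventions for geodesics (which hide factors of $2$), about the exact normalization in the hyperbolic kinematic formula and its proportionality to the Liouville measure, and about checking that the passage to the $\pi_1(S)$-quotient introduces no multiplicities. The safest route, avoiding any independent Crofton computation, is to extract the constant by calibrating against the already-established formula $i(L_X,\gamma)=\ell_X(\gamma)$; the self-intersection then simply acquires this calibration twice, which is the source of $\tfrac14=(\tfrac12)^2$.
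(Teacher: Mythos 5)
First, a point of comparison: the paper gives no proof of this theorem at all --- it is imported wholesale from Bonahon, with a pointer to \cite{Bonahon}. So your proposal is not an alternative to an argument in the paper; it is an attempt to supply one where the author supplies none. Within its scope, the argument for item (4) is correct and is the standard integral-geometric computation. The calibration is right: the normalization $i(L_{X},\gamma)=\ell_{X}(\gamma)$ forces the local density $\tfrac12\sin\phi\,dt\,d\phi$ for (unoriented) geodesics crossing a fixed geodesic, and the identification of $d\widetilde L_{X}\otimes dt$ on pointed unoriented geodesics with $\tfrac12\,dA\otimes d\mu$ (where $\mu$ has total mass $\pi$ on each projective tangent line) follows from the fact that the Liouville measure $dA\,d\theta$ on $T^{1}X$ disintegrates as the flow-invariant transverse density $|\sin\phi|\,dt\,d\phi$. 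Multiplying the two gives the factor $\tfrac14\,|\sin\angle(u_{1},u_{2})|$, the $\pi_{1}(S)$-action on the crossing locus is free because it is free on the intersection point $p$, and $\int_{0}^{\pi}\int_{0}^{\pi}|\sin(\theta_{1}-\theta_{2})|\,d\theta_{1}\,d\theta_{2}=2\pi$ yields $i(L_{X},L_{X})=\tfrac{\pi}{2}\mathrm{Area}(X)=\pi^{2}|\chi(S)|$ via Gauss--Bonnet. All constants check out.

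The genuine gap is one of scope: the theorem has four assertions, and you prove only the fourth, explicitly taking the third as an input (it is your calibration) and saying nothing about the first two. Density of weighted multi-curves in $\curr$ and the continuous extension of $i(\cdot,\cdot)$ to $\curr\times\curr$ are the substantive analytic content of Bonahon's theorem --- they require the transverse-measure formalism, the compactness/continuity arguments for the intersection pairing on the double geodesic space, and an approximation argument (essentially the flow-box/closing construction the paper recalls in Section \ref{sec.2}); none of this is present or even sketched in your proposal. Likewise item (3) is itself a nontrivial statement about the Liouville current that must be established before it can be used to pin down the constant. If the intent is to prove the whole theorem rather than to verify the numerical value in (4), you would need to either prove (1)--(3) or state clearly, as the paper does, that they are quoted from \cite{Bonahon}; as written, your argument is a correct derivation of (4) from (3), not a proof of the theorem.
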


See, for example, \cite{GT, zbMATH07539256} for beautiful applications of geodesic currents.
Mart\'inez-Granado and Thurston \cite{GT} observed that many ``length functions'',
which measure the length of closed curves on the surface $S$, extend continuously to the space $\curr$.
In particular, they showed that for any $X\in\T$, the square root of the extremal length function $\sqext(\cdot)$ gives a continuous function $\sqext:\curr\to\R$.
The notion of extremal length is known to be important in Teichm\"uller theory, see for example \cite{Gardiner-Masur, Kerckhoff, zbMATH06384016, sibling} and references therein.
This note aims to prove the following, which contains a conjecture of Mart\'inez-Granado and Thurston.

\begin{thm}[{c.f. \cite[Conjecture 4.18]{GT}}]\label{thm.main}
For any $X\in\T$ and $\mu\in\curr$, 
we have
\begin{equation}
\sqext(\mu) = \sup_{\rho}\frac{\ell_{\rho}(\mu)}{\sqrt{\mathrm{Area}(\rho)}}\label{eq.main}
\end{equation}
where $\rho$ runs over all allowable conformal metrics on $X$ (see Definition \ref{defi.ext} and Proposition \ref{prop.confext} for the details).
In particular, we have
\[
\ext(L_X) = \frac{~\pi^{2}}{4} \operatorname{Area}(X) = \frac{~\pi^{3}}{2}|\chi(S)|.
\]
\end{thm}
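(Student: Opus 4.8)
The plan is to prove the variational identity \eqref{eq.main} first and then read off $\ext(L_X)$ from it by Cauchy--Schwarz. Throughout, fix $X\in\T$ with hyperbolic metric $\rho_X$ (so $\ell_{\rho_X}=\ell_X$), and write an allowable conformal metric as $\rho=f\cdot\rho_X$ with $f\ge 0$ a measurable density; then $\mathrm{Area}(\rho)=\int_X f^{2}\,dA_{\rho_X}$. The inequality $\sqext(\mu)\ge \ell_\rho(\mu)/\sqrt{\mathrm{Area}(\rho)}$ is the easy half: for a closed curve it is the classical definition of extremal length, and since weighted multi-curves are dense in $\curr$ (Theorem \ref{thm.Bonahon}(1)) while $\sqext$ and each $\ell_\rho$ are continuous on $\curr$ (Mart\'inez-Granado--Thurston), it passes to all currents; I expect this to be the content of Proposition \ref{prop.confext}.

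The substantive half is $\sqext(\mu)\le \sup_\rho \ell_\rho(\mu)/\sqrt{\mathrm{Area}(\rho)}$, i.e.\ that the supremum is (nearly) attained. For a weighted multi-curve $\mu$, classical extremal-length theory supplies a (near-)extremal metric $\rho_\mu$, normalized to $\mathrm{Area}(\rho_\mu)=1$, with $\ell_{\rho_\mu}(\mu)=\sqext(\mu)$. For a general $\mu\in\curr$, choose multi-curves $\mu_n\to\mu$, form the normalized extremal metrics $\rho_n:=\rho_{\mu_n}$, and extract a limit $\rho_\infty$ within the class of allowable metrics; joint continuity of $(\rho,\nu)\mapsto\ell_\rho(\nu)$ together with continuity of $\sqext$ then give $\ell_{\rho_\infty}(\mu)\ge\limsup_n\sqext(\mu_n)=\sqext(\mu)$ with $\mathrm{Area}(\rho_\infty)\le 1$, which yields the claim. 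The main obstacle is precisely this compactness step: one must pin down the correct class of ``allowable'' metrics and a topology on it in which lengths of currents vary continuously, and then rule out the $\rho_n$ degenerating (regions collapsing, or mass escaping into a thin part). This is exactly where the diameter bound advertised in the abstract enters — a uniform bound on the diameter of area-one extremal metrics confines the $\rho_n$ to a compact family — and establishing that bound (via flat-geometry estimates for the multi-curve extremal metrics) is the technical heart of the argument. Combined with the easy half, this proves \eqref{eq.main}.

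Finally, evaluate at $\mu=L_X$. Since $i(L_X,\gamma)=\ell_X(\gamma)$ for closed curves (Theorem \ref{thm.Bonahon}(3)), continuity and density give $\ell_{\rho_X}(L_X)=i(L_X,L_X)=\tfrac{\pi}{2}\mathrm{Area}(X)$ by Theorem \ref{thm.Bonahon}(4), so taking $\rho=\rho_X$ in \eqref{eq.main} gives $\sqext(L_X)\ge \tfrac{\pi}{2}\sqrt{\mathrm{Area}(X)}$. Conversely, for any allowable $\rho=f\cdot\rho_X$, approximating $L_X$ by multi-curves and using that the $\rho$-length of a free homotopy class is at most the $\rho$-length of its $\rho_X$-geodesic representative, one obtains $\ell_\rho(L_X)\le \tfrac{\pi}{2}\int_X f\,dA_{\rho_X}$: the right-hand side is the integral of $f$ against the projection of $L_X$ to $X$, which is $\tfrac{\pi}{2}$ times the hyperbolic area form — the constant being forced by $\ell_{\rho_X}(L_X)=\tfrac{\pi}{2}\mathrm{Area}(X)$ — and equality holds at $\rho=\rho_X$. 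By Cauchy--Schwarz,
\[
\int_X f\,dA_{\rho_X}\ \le\ \sqrt{\mathrm{Area}(X)}\cdot\Big(\int_X f^{2}\,dA_{\rho_X}\Big)^{1/2}=\sqrt{\mathrm{Area}(X)\cdot\mathrm{Area}(\rho)},
\]
hence $\ell_\rho(L_X)/\sqrt{\mathrm{Area}(\rho)}\le \tfrac{\pi}{2}\sqrt{\mathrm{Area}(X)}$ for every allowable $\rho$, with equality exactly when $f$ is constant, i.e.\ at $\rho=\rho_X$. Therefore $\sqext(L_X)=\tfrac{\pi}{2}\sqrt{\mathrm{Area}(X)}$, and squaring, $\ext(L_X)=\tfrac{\pi^{2}}{4}\mathrm{Area}(X)=\tfrac{\pi^{3}}{2}|\chi(S)|$ using $\mathrm{Area}(X)=2\pi|\chi(S)|$; in particular this value depends only on the topology of $S$, and $\rho_X$ is (up to scale) the unique extremal metric for $L_X$.
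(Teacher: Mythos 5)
Your easy half (Proposition \ref{prop.lower} in the paper) and your final Cauchy--Schwarz computation for $L_X$ match the paper's argument in outline, but the substantive half of \eqref{eq.main} --- the inequality $\sqext(\mu)\le\sup_\rho \ell_\rho(\mu)/\sqrt{\mathrm{Area}(\rho)}$ --- is not proved in your proposal; it is only reduced to a compactness claim that you yourself flag as ``the main obstacle'' and never establish. Concretely: you propose to take multi-curves $\mu_n\to\mu$, form their normalized extremal metrics $\rho_n$, and ``extract a limit $\rho_\infty$'' in a topology where $(\rho,\nu)\mapsto\ell_\rho(\nu)$ is jointly continuous. No such topology is exhibited. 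Allowable metrics are merely nonnegative locally $L^2$ densities; the natural weak-$L^2$ limit of the $\rho_n$ need not be allowable (it can vanish), and $\ell_\rho(\nu)$ --- an infimum over representatives --- does not vary continuously in $\rho$ under weak convergence in the direction you need, nor jointly in $(\rho,\nu)$. This is exactly the semicontinuity trap the paper isolates in Remark \ref{rmk.subtle}. You also misattribute the role of the diameter bound: Theorem \ref{thm.extbound} is proved in the appendix as an independent result and is \emph{not} used in the proof of Theorem \ref{thm.main}; even granted a uniform diameter bound, it would not by itself yield the compactness-plus-continuity package your argument requires.

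The paper's actual route avoids taking any limit of metrics. Using the Mart\'inez-Granado--Thurston cross-section machinery, it produces a specific approximating sequence of weighted multi-curves $\Lambda_n=\Lambda(n,\mu)/n$ with $\sqext(\Lambda_n)\to\sqext(\mu)$ and $\ell_\rho(\Lambda_n)\to\ell_\rho(\mu)$, and then uses the smeared join lemma (Lemma \ref{lem.join}) twice to show that the area-one extremal metric $\rho_n$ of the single multi-curve $\Lambda_n$ already satisfies $\sqext(\Lambda_{nI})\le \ell_{\rho_n}(\Lambda_{nI})+2A_\tau\sqext(K_\tau)/n$ for all $I$; letting $I\to\infty$ gives $\sqext(\mu)\le\ell_{\rho_n}(\mu)+O(1/n)$, and each fixed $\rho_n$ is a competitor in the supremum. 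Separately, for the evaluation at $L_X$ the paper does not rely on the identity ``the projection of $L_X$ to $X$ is a multiple of the area form'' applied to an arbitrary measurable density: it proves the bound $\ell_\rho(L_X)^2/\mathrm{Area}(\rho)\le\frac{\pi^2}{4}\mathrm{Area}(X)$ via Hopf's ergodic theorem applied to $f=\rho/\rho_X$ along a generic geodesic (Theorem \ref{thm.Hopf}), which is what makes the equidistribution step rigorous for a merely Borel, locally $L^2$ density $\rho$. Your sketch of that step asserts the conclusion without supplying the ergodicity (or an equivalent disintegration) argument. To repair your proposal you would either have to carry out the compactness construction in full (specifying the class of metrics, the topology, and the continuity statements) or adopt the subadditivity argument of \eqref{eq.4.1}--\eqref{eq.4.8}.
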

See Remark \ref{rmk.subtle} for the difficulty of the statement of Theorem \ref{thm.main}.
\begin{rmk}\label{rmk.hyp}
A conformal metric that attains the supremum in \eqref{eq.main} is called the \emph{extremal metric} (see Appendix \ref{sec.extremal metric} for more details). 

In general extremal metrics are very mysterious.
Theorem \ref{thm.main} implies that 
the hyperbolic metric \(\rho_X\) is an extremal metric for the Liouville current \(L_X\).
As a comparison, we prove that the hyperbolic metric cannot be extremal for any weighted multi-curve in Corollary \ref{coro.hyp}.

The statement of Theorem \ref{thm.main} differs from the conjecture in \cite{Liu--ext} by a factor of 16. This is due to the difference in normalization in \cite[Proposition 2.5]{Liu--ext}.
\end{rmk}

In the appendix, assuming their existence, we discuss extremal metrics for weighted multi-curves. 
We remark that this discussion is independent of the previous results.
The following theorem may be of independent interest.
\begin{thm}\label{thm.extbound}
Let $X\in\T$. Then there exists $D=D(X)>0$ such that for any weighted multi-curve $c$, the extremal metric of area $1$ for $c$ on $X$ has diameter at most $D$.
\end{thm}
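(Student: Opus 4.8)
The goal is to bound the diameter of the area-$1$ extremal metric $\rho_c$ for an arbitrary weighted multi-curve $c$ on a fixed $X\in\T$, by a constant depending only on $X$. My plan is to reduce to two quantitative inputs about $\rho_c$: a lower bound on how much area it must concentrate near any point, and a control of its extremal length in terms of that of $X$.

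First I would record the key comparison inequality. For any conformal metric $\rho$ of area $1$ on $X$ and any closed curve $\gamma$, the definition of extremal length gives $\ell_\rho(\gamma)^2 \le \ext(\gamma)$, hence by Theorem \ref{thm.main} (applied with the metric $\rho_X$) also $\ell_\rho(\gamma) \le \sqext(\gamma) \le C_X\,\ell_X(\gamma)$ for a constant $C_X$ depending only on $X$ — here one uses that $\sqext(\cdot)$ and $\ell_X(\cdot)$ are comparable continuous functions on the compact space of projectivized currents. Conversely, the extremality of $\rho_c$ says exactly that $\ell_{\rho_c}(c) = \sqext(c)$, so $\rho_c$ realizes the supremum; in particular $\rho_c$ cannot be "too small" on the support of $c$.

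Next, the main step: if $\rho_c$ had large diameter, there would be a point $p$ and a radius $R$ (large) such that the metric ball $B(p,R)$ is a proper subsurface — or more precisely, one can find an essential annulus or a separating curve $\gamma$ cutting off a region of large $\rho_c$-inradius. On such a region the extremal length of the core curve (or of the family of curves crossing it) is forced to be large, by a standard length–area / modulus estimate: a long thin collar of $\rho_c$-length $\ell$ and small $\rho_c$-width $w$ has extremal length at least of order $\ell/w$, and since the total area is $1$, $w$ is controlled, forcing $\ext_{\rho_c}$ of the relevant curve family to grow with $R$. But $\ell_{\rho_c}(\gamma) \le C_X\,\ell_X(\gamma)$ from the first step bounds this from above independently of $c$. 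Quantifying "large diameter forces a long thin tube" is the heart of the argument; the cleanest route is to use that $X$ has bounded geometry, so there are finitely many topological types of subsurfaces, and a diameter exceeding $D$ forces, by a pigeonhole/covering argument on a pants decomposition of $X$, some pair of pants or some collar in which $\rho_c$ stretches a uniformly bounded-area region to have inradius $\gg 1$, which is incompatible with the extremal-length bound.

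Finally I would assemble the contradiction: choose $D = D(X)$ larger than the threshold produced by the modulus estimate once $C_X$ is fixed; then $\mathrm{diam}(\rho_c) \le D$ for every weighted multi-curve $c$. The main obstacle I anticipate is making the "large diameter $\Rightarrow$ large extremal length of some curve" implication uniform over all multi-curves $c$ simultaneously — the extremal metric $\rho_c$ can be very degenerate (flat with cone points, concentrating area along $c$), so one cannot assume smoothness or a lower area-density bound a priori; the way around this is to work only with the length–area inequality, which needs no regularity, and to exploit the finiteness of topological types of the thick part of $X$ to get a single uniform constant. A secondary technical point is justifying the comparison $\sqext(\gamma)\le C_X\ell_X(\gamma)$ uniformly, which follows from continuity of both functions on $\mathrm{Curr}(S)$ together with positivity and homogeneity, via compactness of $\mathbb{P}\mathrm{Curr}(S)$.
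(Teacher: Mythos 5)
Your proposal has a genuine gap at its central step: the implication ``large diameter $\Rightarrow$ long thin tube / large extremal length of some essential curve family'' is false for conformal metrics of bounded area, and the length--area inequality alone cannot close it. The paper's own appendix opens with the counterexample $\rho(z)=\bigl(|z|\log(1/|z|)\bigr)^{-1}$ on the disk $B(0,1/e)$: this metric has area $2\pi$, boundary length $2\pi e$, and \emph{infinite} diameter, yet it lives on a simply connected region and therefore does not increase the extremal length (or the modulus) of any essential curve family on $X$ by more than a bounded amount. A ``spike'' of infinite diameter can sit over a set of arbitrarily small (even zero) area and is invisible to every inequality of the form $\ell_\rho(\gamma)\le\sqext(\gamma)\le C_X\ell_X(\gamma)$. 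So your pigeonhole/covering argument on a pants decomposition, which only invokes bounded area and the comparison of extremal lengths, cannot rule out such spikes; your stated workaround (``work only with the length--area inequality, which needs no regularity'') is precisely what fails.

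What is missing is a \emph{local} use of extremality, and this is where the paper's argument differs essentially from yours. You only use extremality through the global identity $\ell_{\rho_c}(c)=\sqext(c)$; the paper instead proves (Lemma \ref{lem.extconf}) that if $\rho$ is extremal for $c$, then near-minimizing representatives of $c$ must pass arbitrarily close to \emph{every} point of $X$ --- otherwise one could shrink $\rho$ on a positive-area set without changing $\ell_\rho(c)$, contradicting extremality. From this it deduces (Corollary \ref{coro.key}) that any simply connected region $C$ satisfies $d_\rho(p,\partial C)\le L_\rho(\partial C)/4$ for all $p\in C$, i.e.\ short boundary forces small diameter --- exactly the statement that kills the spike. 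The remaining half of your plan is then sound and matches the paper: a fixed pants decomposition of $X$ together with seams has representatives of $\rho$-length at most $\sqrt{D'}$ with $D'=D'(X)$ (since each relevant extremal length in the subsurface is bounded by a constant depending only on $X$ and $\mathrm{Area}(\rho)\le 1$), and these cut $X$ into simply connected pieces of bounded perimeter. To repair your proof you would need to supply the analogue of Lemma \ref{lem.extconf} and Corollary \ref{coro.key}; without them the argument does not go through.
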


\section{Geodesic flow and conformal structures}\label{sec.2}
Let $X\in\T$, and we first regard $X$ as a hyperbolic surface.
As is well-known to be an idea of Thurston (see \cite[p.151]{Bonahon} for details),
the Liouville current $L_{X}$ is obtained as the limit of ``random'' closed geodesics as follows.
Let \( T_1 X \) denote the unit tangent bundle of \( X \), and let \( \mathcal{L}_X \) denote the Liouville measure on \( T_1 X \), which is locally the product of the hyperbolic area measure and the angular measure.
Pick $v\in T^{1}X$,
and consider the geodesic flow trajectory $\varphi_{t}(v)$ on $X$ given by $v$.
Let $D_{X}>0$ be a fixed constant so that for any $t$, we may connect $\varphi_{t}(v)$ and $\varphi_{0}(v)$ by a path of length less than $D_{X}$.
This procedure gives a closed curve $g_{t}(v)\subset X$.
Then the Liouville current $L_{X}$ is characterized as (\cite[p.151]{Bonahon}),

\begin{equation}
\lim_{t\to\infty}\frac{i(L_{X},L_{X})}{i(L_{X},g_{t}(v))}\cdot g_{t}(v) = L_{X}.
\end{equation}

For later convenience, let
\begin{equation}
G_{t}(v):=\frac{i(L_{X},L_{X})}{i(L_{X},g_{t}(v))}\cdot g_{t}(v) \in\curr.
\end{equation}

One may check the normalization constant by considering $i(L_{X},G_{t}(v))$ compared with $i(L_{X},L_{X})$.
Notice the following.
\begin{prop}\label{prop.ergodic}
There exists $C=C(X,v)>0$ which is independent of $t$ such that we have
\begin{equation}
|t-i(L_{X},g_{t}(v))|<C
\end{equation}
for any $t$, where $t$ is the length of the flow trajectory from $\varphi_{t}(0)$ to $\varphi_{t}(v)$.
\end{prop}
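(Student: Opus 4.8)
The plan is to invoke Bonahon's identity $i(L_X,g_t(v))=\ell_X(g_t(v))$ from Theorem \ref{thm.Bonahon}(3) and estimate the hyperbolic length of the closed curve $g_t(v)$ from both sides. The upper bound is immediate: the geodesic flow trajectory of length $t$ together with the closing arc (of length $<D_X$) is a loop freely homotopic to $g_t(v)$ of total length $<t+D_X$, so $\ell_X(g_t(v))<t+D_X$, i.e. $i(L_X,g_t(v))-t<D_X$.

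For the lower bound I would pass to the universal cover $\mathbb{H}^2$. Lift the trajectory segment to a geodesic $[p,q]$ with $d(p,q)=t$, and let $\gamma\in\pi_1(S)$ be the covering transformation representing the free homotopy class of $g_t(v)$, so that $\ell_X(g_t(v))$ equals the translation length $\ell(\gamma)=\inf_x d(x,\gamma x)$ and $A$ its axis. The lift of the closing arc runs from $q$ to $\gamma p$ and has length $<D_X$, so $d(q,\gamma p)<D_X$, hence $d(p,\gamma p)>t-D_X$ and, applying $\gamma^{-1}$, also $d(q,\gamma q)>t-D_X$. Since $d(p,\gamma p)$ being large does not by itself force $\ell(\gamma)$ to be large, the substance of the proof is to show that the $\gamma$-invariant broken geodesic $\widehat\alpha:=\bigcup_{n\in\mathbb{Z}}\gamma^{n}\big([p,q]\cup[q,\gamma p]\big)$ stays within a bounded neighbourhood of $A$.

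To see this I would first note, using the two inequalities $d(p,\gamma p),d(q,\gamma q)>t-D_X$ together with the definition of the Gromov product, that at every break point of $\widehat\alpha$ the incoming and outgoing directions make a bounded ``turn''; concretely the relevant Gromov products all come out $<D_X$ (for instance $(\gamma^i p\mid\gamma^{i+1}p)_{\gamma^i q}=\tfrac12\big(t+d(q,\gamma p)-d(p,\gamma p)\big)<D_X$, and symmetrically at the other type of break point). Since the long pieces $\gamma^{n}[p,q]$ have length exactly $t$, the connectors have length $<D_X$, and the turns are uniformly bounded, the local-to-global principle for (broken) quasigeodesics in $\mathbb{H}^2$ gives, \emph{provided $t$ exceeds a threshold $L_0$ depending only on $X$}, that $\widehat\alpha$ is a $(\lambda,\varepsilon)$-quasigeodesic line with $\lambda,\varepsilon$ depending only on $X$. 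By Morse stability, $\widehat\alpha$ lies within Hausdorff distance $\kappa=\kappa(X)$ of the bi-infinite geodesic joining its two endpoints at infinity; as $\widehat\alpha$ is $\gamma$-invariant and $\gamma$ is hyperbolic (for $t>D_X$ one has $\gamma\neq 1$), that geodesic is exactly $A$. Hence $d(p,A)\le\kappa$, and projecting $p$ and $\gamma p$ to $A$ yields $\ell(\gamma)\ge d(p,\gamma p)-2\kappa>t-D_X-2\kappa$. Combined with the upper bound this gives $|t-i(L_X,g_t(v))|<D_X+2\kappa$ for all $t>L_0$; for $t\le L_0$ the trivial bounds $0\le i(L_X,g_t(v))<t+D_X$ already give $|t-i(L_X,g_t(v))|<\max(L_0,D_X)$, so any $C$ larger than $\max(L_0,\,D_X+2\kappa)$ works.

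The step I expect to be the main obstacle is the quasigeodesic claim for $\widehat\alpha$: because $\widehat\alpha$ alternates long geodesic segments with short connectors (rather than consisting of uniformly long segments), the textbook ``broken geodesic with angles bounded away from $\pi$'' statement does not apply verbatim. One has to use the variant of the local-to-global lemma in which segments of length $\ge L$ are joined by jumps of bounded length with bounded turn at each jump — the point being that bounded connectors with bounded turns contribute only a bounded oscillation, so for $L$ large compared with $D_X$ and the (universal) hyperbolicity constant of $\mathbb{H}^2$ the path is still a uniform quasigeodesic. Everything else — the length upper bound, the Gromov product computation, and extracting $\ell(\gamma)\ge t-C$ from Hausdorff closeness of $\widehat\alpha$ to $A$ — is routine hyperbolic geometry.
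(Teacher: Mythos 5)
Your proposal is correct and takes essentially the same route as the paper: the paper's proof is a three-sentence sketch asserting that $g_t(v)$ is a quasi-geodesic whose lift fellow-travels the flow trajectory (equivalently, stays near the axis of the corresponding deck transformation), which is exactly the content you establish via the Gromov-product/local-to-global/Morse-stability argument, combined with Bonahon's identity $i(L_X,g_t(v))=\ell_X(g_t(v))$. Your version is considerably more detailed than the paper's, and the one technical point you flag (the local-to-global lemma for long geodesic pieces joined by short connectors with bounded turn) is a genuine but standard issue that your proposed fix handles.
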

\begin{proof}
The curve $g_{t}(v)$ is a concatenation of paths of length at most $D_{X}$ and a geodesic flow trajectory. Hence, if $|t|$ is large enough, $g_{t}(v)$ is a quasi-geodesic. In the universal covering, the limit points of the lift of $g_{t}(v)$ converge to those of the geodesic flow trajectory $\varphi_{t}(v)$ as $t\to\infty$. Hence, we see that for large enough $t$, a large portion of $g_{t}(v)$ fellow travels with $\varphi_{t}(v)$.
\end{proof}

Now we regard $X\in\T$ as a Riemann surface.
\begin{defi}\label{defi.ext}
A metric $\rho(z)|dz|$ on $X$ is called {\em an allowable conformal} metric if
$\rho$ is Borel measurable, non-negative, and locally $L_{2}$, and its area defined by 
\begin{equation}
\displaystyle \mathrm{Area}(\rho) := \int_{X}\rho^{2}dxdy
\end{equation}
is neither $0$ nor $\infty$.
Let $\Gamma = \{\gamma_{1},\dots, \gamma_{n}\}$ be a family of closed curves and arcs on $X$.
The {\em extremal length} of $\Gamma$ is defined as 
\begin{equation}
\ext(\Gamma):=\sup_{\rho}\frac{\ell_{\rho}(\Gamma)^{2}}{\mathrm{Area}(\rho)}\label{eq.defext}
\end{equation}
where the supremum is taken over all the allowable conformal metrics on $X$ and
\begin{enumerate}
\item[(i)] $\displaystyle L_{\rho}(\gamma):=\int_{\gamma}\rho|dz|$ is the $\rho$-length of a path $\gamma$,
\item[(ii)] $\displaystyle\ell_{\rho}(\Gamma):=\sum_{i=1}^{n}\inf_{\gamma'_{i}}L_{\rho}(\gamma'_{i})$ 
where the infimum is taken over all $\gamma'_{i}$ homotopic to $\gamma_{i}$ relative to the boundary.
\end{enumerate}
Using the $\rho$-length function, we define $\rho$-distance $d_{\rho}(\cdot,\cdot):X\times X\to\R_{\geq 0}$ by
$$
d_{\rho}(x,y) = \inf_{\gamma}\ell_{\rho}(\gamma)
$$
where the infimum is taken over all the arcs connecting $x$ and $y$ in $X$.
\end{defi}
Let us summarize the work of \cite{GT}.
\begin{prop}[{\cite[Section 4.3, Section 4.8]{GT}}]\label{prop.confext}
For any conformal metric $\rho(z)|dz|$ on $X$, the length function $\ell_{\rho}(\cdot)$ extends continuously to $\curr$.
The square root of the extremal length function $\sqext(\cdot)$ also extends continuously to $\curr$.
\end{prop}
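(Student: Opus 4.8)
The plan is to prove the two assertions in turn, deriving the continuity of $\sqext$ from that of the functions $\ell_\rho$. For $\ell_\rho$ I would first treat a \emph{continuous} conformal metric $\rho$, and then pass to a general allowable metric by approximation. For continuous $\rho$: lift it to a $\pi_1(S)$-equivariant metric $\tilde\rho$ on the universal cover $\mathbb{H}^2$; since $S$ is closed, $\tilde\rho$ is uniformly bi-Lipschitz to the hyperbolic metric, so $(\mathbb{H}^2,\tilde\rho)$ is a proper Gromov hyperbolic geodesic space whose boundary is $S^1$, and any two distinct boundary points are joined by a $\tilde\rho$-geodesic line, unique up to bounded Hausdorff distance, with a $\pi_1(S)$-equivariant measurable choice. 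Fixing a Borel fundamental domain $F\subset\mathbb{H}^2$ for the $\pi_1(S)$-action, define, for a current $\mu$ (a $\pi_1(S)$-invariant Radon measure on the space of geodesics), $\ell_\rho(\mu)$ to be the integral against $\mu$ of the $\tilde\rho$-length of the selected geodesic inside $F$. A standard unwinding in the universal cover shows this recovers $\ell_\rho(\gamma)$ when $\mu$ is a closed curve $\gamma$, and $\mu\mapsto\ell_\rho(\mu)$ is continuous because its integrand is bounded and supported on the $\pi_1(S)$-cocompact set of geodesics meeting $F$, so convergence of currents forces convergence of the integrals.

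For a general allowable ($L^2$) metric $\rho$, approximate it by continuous conformal metrics $\rho_n\to\rho$ in $L^2(X)$ and show $\ell_{\rho_n}\to\ell_\rho$ locally uniformly on $\curr$. The delicate point — one of the two places where I expect real work — is that $\rho$-length is a one-dimensional integral, so $L^2$-smallness of $\rho_n-\rho$ does not by itself bound $\int_\gamma|\rho_n-\rho|\,ds$ along a single curve. The remedy is to stay at the level of currents: for a fixed $\mu$ the error is an integral of $|\rho_n-\rho|$ against the push-forward to $X$ of the arclength-weighted flow-invariant measure that $\mu$ determines, and the classical theory of conformal ($L^2$) metrics and exceptional curve families (Fuglede, Ahlfors--Beurling) supplies enough control to make $\ell_\rho$ well defined and a locally uniform limit of the $\ell_{\rho_n}$, hence continuous. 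This is in essence the argument of \cite[Section~4.3]{GT}.

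Granting that $\ell_\rho$ is continuous for every allowable $\rho$, continuity of $\sqext$ follows. For a closed curve $\gamma$, and more generally for a weighted multi-curve $c$, one has $\sqext(c)=\sup_\rho \ell_\rho(c)/\sqrt{\mathrm{Area}(\rho)}$ by Definition \ref{defi.ext}; hence $E(\mu):=\sup_\rho \ell_\rho(\mu)/\sqrt{\mathrm{Area}(\rho)}$, built from the continuous extensions of the $\ell_\rho$, restricts to $\sqext$ on closed curves, and once $E$ is shown to be continuous it must be the unique continuous extension of $\sqext$. Scale-invariance of the quotient gives $E(\mu)=1/\sqrt{m(\mu)}$ with $m(\mu):=\inf\{\mathrm{Area}(\rho):\ell_\rho(\mu)\geq1\}$. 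Being a supremum of continuous functions, $E$ is lower semicontinuous — equivalently $m$ is upper semicontinuous, which is in any case immediate since strict feasibility $\ell_\rho(\cdot)>1$ is an open condition by the first part. Everything therefore reduces to the opposite inequality, lower semicontinuity of $m$, and this is the second and deeper obstacle: from $\mu_k\to\mu$ with near-extremal metrics $\rho_k$ (so $\ell_{\rho_k}(\mu_k)\ge 1$ and $\mathrm{Area}(\rho_k)$ within $o(1)$ of $m(\mu_k)$) one must extract a limiting metric, which needs an a priori bound on the $\rho_k$ — precisely the uniform control furnished by Theorem \ref{thm.extbound} — together with lower semicontinuity of area and joint continuity of $(\rho,\mu)\mapsto\ell_\rho(\mu)$ along the resulting convergence. (Alternatively, and this is the route of \cite[Section~4.8]{GT}, one verifies that $\sqext$ on multi-curves is homogeneous, subadditive, and bounded on intersection-bounded sets, and invokes a general criterion guaranteeing that such a length function extends continuously to $\curr$, thereby sidestepping compactness.) Combining the two semicontinuities yields continuity of $\sqext$ on $\curr$.
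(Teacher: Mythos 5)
The paper offers no proof of this proposition: it is quoted verbatim from \cite[Sections 4.3 and 4.8]{GT}, and the route actually taken there for $\sqext$ is the one you mention only parenthetically at the end --- verifying that $\sqrt{\mathrm{Ext}}$ on multi-curves satisfies the axioms of their extension theorem (homogeneity, convex union, stability under smoothing) and running the return-trajectory/subadditivity machinery. Your primary route has a genuine gap precisely at the step you call ``the second and deeper obstacle.'' You set $E(\mu):=\sup_\rho \ell_\rho(\mu)/\sqrt{\mathrm{Area}(\rho)}$ and propose to prove that $E$ is continuous, hence is the extension of $\sqext$. But the identity between the continuous extension of $\sqext$ and $E$ is exactly the content of Theorem \ref{thm.main}, the main result of this paper; Remark \ref{rmk.subtle} warns explicitly that $E$, being a pointwise supremum of continuous functions, is a priori only lower semicontinuous, and the whole of Section 3 is devoted to closing that gap \emph{after} Proposition \ref{prop.confext} is already available. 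So your plan both inverts the paper's logical order and leaves the hard half (upper semicontinuity of $E$) unproved.

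The tool you invoke to close it does not work. From $\mu_k\to\mu$ with near-extremal $\rho_k$ normalized by $\ell_{\rho_k}(\mu_k)\geq 1$, a diameter bound on area-one extremal metrics (Theorem \ref{thm.extbound}) gives no compactness in any topology for which $\mathrm{Area}$ is lower semicontinuous \emph{and} $(\rho,\mu)\mapsto\ell_\rho(\mu)$ is continuous: bounded diameter and area do not control covering numbers, a Gromov--Hausdorff limit need not be induced by an allowable conformal metric on $X$, and the natural alternative --- weak $L^2$ compactness from $\mathrm{Area}(\rho_k)\leq C$ --- fails because integration of $\rho$ along a curve is not weakly $L^2$-continuous (the same one-dimensional-integral obstruction you correctly flag in the first half). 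That first half is also only a sketch: for continuous $\rho$ your geodesic-selection integral must handle non-uniqueness of $\tilde\rho$-geodesics and metrics with zeros, and for general allowable $\rho$ the Fuglede exceptional-family argument is named but not carried out. To repair the proposal you should promote the parenthetical alternative to the main argument: prove continuity of $\ell_\rho$ (Section 4.3 of \cite{GT}) and then deduce continuity of $\sqext$ from the axiomatic extension theorem of \cite{GT}, not from the variational formula $E$.
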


We first prove an easy consequence of the definition of extremal length.
\begin{prop}\label{prop.lower}
For any $X$ and $\mu\in\curr$ we have
\begin{equation}
\sup_{\rho}\frac{\ell_{\rho}(\mu)}{\sqrt{\mathrm{Area}(\rho)}}\leq \sqext(\mu)\label{eq.lower1}
\end{equation}
In particular, we have 
\begin{equation}
\frac{\pi}{2}\sqrt{\mathrm{Area}(X)}=\frac{i(L_{X},L_{X})}{\sqrt{\mathrm{Area}(X)}}
\leq {\sqext(L_{X})}.\label{eq.ext}
\end{equation}
\end{prop}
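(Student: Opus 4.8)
The plan is to establish the pointwise inequality $\ell_{\rho}(\mu)\le\sqext(\mu)\sqrt{\mathrm{Area}(\rho)}$ for every allowable conformal metric $\rho$ on $X$ and every $\mu\in\curr$, and then take the supremum over $\rho$ to obtain \eqref{eq.lower1}. The ``in particular'' statement \eqref{eq.ext} will then follow by evaluating the left-hand side of \eqref{eq.lower1} at the hyperbolic metric $\rho_{X}$ and invoking the facts collected in Theorem \ref{thm.Bonahon}.

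Before that I would record two bookkeeping observations. First, both $\ell_{\rho}(\cdot)$ and $\sqext(\cdot)$ are homogeneous of degree one on $\curr$: on weighted multi-curves this is visible from Definition \ref{defi.ext} (replacing a family $\Gamma$ by $k$ parallel copies multiplies $\ell_{\rho}$ by $k$ and hence $\ext$ by $k^{2}$), and homogeneity then persists under the continuous extension of Proposition \ref{prop.confext}. Second, for an integer-weighted multi-curve $\Gamma$ the value $\sqext(\Gamma)$ furnished by Proposition \ref{prop.confext} coincides with $\sqrt{\ext(\Gamma)}$ as defined in Definition \ref{defi.ext}; this is part of the Mart\'inez-Granado--Thurston construction in \cite{GT}, and in any case follows from density of weighted multi-curves (Theorem \ref{thm.Bonahon}(1)) together with uniqueness of continuous extensions.

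Now for the pointwise inequality. For an integer-weighted multi-curve $\Gamma$ it is immediate from \eqref{eq.defext}: $\ell_{\rho}(\Gamma)^{2}/\mathrm{Area}(\rho)\le\ext(\Gamma)=\sqext(\Gamma)^{2}$, so $\ell_{\rho}(\Gamma)\le\sqext(\Gamma)\sqrt{\mathrm{Area}(\rho)}$. Clearing a common denominator of the weights and using degree-one homogeneity extends the inequality to all weighted multi-curves with positive rational weights. For a general $\mu\in\curr$, pick by Theorem \ref{thm.Bonahon}(1) (approximating the weights by rationals) a sequence $c_{n}$ of rationally weighted multi-curves with $c_{n}\to\mu$; the continuity statements of Proposition \ref{prop.confext} give $\ell_{\rho}(c_{n})\to\ell_{\rho}(\mu)$ and $\sqext(c_{n})\to\sqext(\mu)$, so passing to the limit in $\ell_{\rho}(c_{n})\le\sqext(c_{n})\sqrt{\mathrm{Area}(\rho)}$ yields $\ell_{\rho}(\mu)\le\sqext(\mu)\sqrt{\mathrm{Area}(\rho)}$. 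Taking the supremum over all allowable $\rho$ gives \eqref{eq.lower1}.

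Finally, for \eqref{eq.ext}, specialize to $\mu=L_{X}$ and $\rho=\rho_{X}$, noting $\mathrm{Area}(\rho_{X})=\mathrm{Area}(X)$. Theorem \ref{thm.Bonahon}(3) gives $\ell_{\rho_{X}}(\gamma)=i(L_{X},\gamma)$ for closed curves $\gamma$; both sides extend continuously to $\curr$ (by Proposition \ref{prop.confext} and Theorem \ref{thm.Bonahon}(2)) and agree, by linearity, on the dense set of weighted multi-curves, hence everywhere, so $\ell_{\rho_{X}}(L_{X})=i(L_{X},L_{X})=\pi\,\mathrm{Area}(X)/2$ by Theorem \ref{thm.Bonahon}(4). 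Dividing by $\sqrt{\mathrm{Area}(X)}$ produces $\tfrac{\pi}{2}\sqrt{\mathrm{Area}(X)}$, which by \eqref{eq.lower1} is at most $\sqext(L_{X})$. The argument is routine; the only points needing a little care are the compatibility observation above (that ``$\sqext$'' on multi-curves is indeed the function being continuously extended) and that the approximating multi-curves may be taken with rational weights so the homogeneity rescaling is legitimate — both supplied by Theorem \ref{thm.Bonahon}(1).
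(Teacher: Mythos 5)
Your proposal is correct and follows essentially the same route as the paper: the inequality is immediate from the definition of extremal length on weighted multi-curves, and then extends to all of $\curr$ by density of weighted multi-curves (Theorem \ref{thm.Bonahon}) together with continuity of $\ell_{\rho}$ and $\sqext$ (Proposition \ref{prop.confext}), after which one specializes to $\rho=\rho_{X}$ and uses $\ell_{\rho_{X}}(L_{X})=i(L_{X},L_{X})=\pi\,\mathrm{Area}(X)/2$. The extra bookkeeping you supply (homogeneity, rational weights, compatibility of $\sqext$ with its continuous extension) is harmless elaboration of steps the paper leaves implicit.
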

\begin{proof}
By the definition of extremal length, for any weighted multi-curve $c$ and any conformal metric $\rho$, we have
$$\frac{\ell_{\rho}(c)}{\sqrt{\mathrm{Area}(\rho)}}\leq \sqext(c).$$

Since the weighted multi-curves are dense in $\curr$ (Theorem \ref{thm.Bonahon}), 
and the maps 
$\ell_{\rho}(\cdot), \sqext(\cdot):\curr\to\R_{\geq 0}$ are continuous (Proposition \ref{prop.confext}), we have \eqref{eq.lower1}.

Let $\rho_{X}$ denote the hyperbolic metric on $X$.
Since $\rho_{X}$ is one of the conformal metrics, we have
\begin{equation}
\frac{\pi}{2}\sqrt{\mathrm{Area}(X) }
=\frac{i(L_{X},L_{X})}{\sqrt{\mathrm{Area}(X)}}
= \frac{\ell_{\rho_{X}}(L_{X})}{\sqrt{\mathrm{Area}(X)}}
\leq \sqext(L_{X})\label{eq.hyp}
\end{equation}
\end{proof}

We now focus on the Liouville current $L_{X}$.
Recall the classical work of Hopf.
Although the original statement of Hopf is for the unit tangent bundle $T_{1}X$,
we state here for $X$ as conformal metrics are independent of angles.
\begin{thm}[{\cite[FIRST THEOREM]{Hopf}}]\label{thm.Hopf-original}
For $X\in\T$, the geodesic flow is ergodic.
In other words, if 
\( f(\cdot) \) and \( g(\cdot) > 0 \) are integrable with respect to 
the hyperbolic metric $\rho^{2}_{X}dxdy$
then
\[
\lim_{T \to \infty} \frac{\displaystyle \int_0^{T} f(\varphi_{t}(v))\, dt}{\displaystyle \int_0^{T} g(\varphi_{t}(v))\, dt}
= \frac{\displaystyle \int_{X} f\, \rho^{2}_{X}dxdy}{\displaystyle \int_{X} g\, \rho^{2}_{X}dxdy}
\]
holds for $\mathcal{L}_{X}$-almost every \( v \in T^{1}X \).
The same holds for the limit as \( T \to -\infty \).
\end{thm}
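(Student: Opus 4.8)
Since this is Hopf's classical ergodicity theorem, the plan is to reproduce the Hopf argument, exploiting that in constant curvature the stable and unstable horocycle foliations are smooth. First I would reduce to $T^1X$: the Liouville measure $\mathcal{L}_X$ projects to the hyperbolic area measure under $T^1X\to X$ and the integrands depend only on the base point, so it suffices to prove the ratio statement for the geodesic flow $\varphi_t$ on $T^1X$ against the normalization of $\mathcal{L}_X$ to a probability measure. By Birkhoff's pointwise ergodic theorem, for continuous $h$ on $T^1X$ the forward average $h^{+}(v)=\lim_{T\to\infty}\tfrac1T\int_0^T h(\varphi_t v)\,dt$ and the backward average $h^{-}(v)$ exist for $\mathcal{L}_X$-a.e.\ $v$ and agree a.e.\ with the conditional expectation of $h$ on the $\varphi$-invariant $\sigma$-algebra; ergodicity is precisely the assertion that this common value is a.e.\ constant, and granting it the displayed ratio formula is immediate by dividing numerator and denominator by $T$ (using $g>0$ and $g$ integrable). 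So the whole content reduces to showing $h^{+}$ is a.e.\ constant for continuous $h$.

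Second comes the Hopf chain of invariances. If $w$ lies on the strong stable manifold of $v$ (a horocycle, since $X$ is hyperbolic), then $d(\varphi_t v,\varphi_t w)\to 0$ as $t\to+\infty$, so uniform continuity of $h$ forces the two Cesàro averages to agree: $h^{+}$ is constant along strong stable leaves wherever it is defined, and being $\varphi$-invariant it is then constant along weak stable leaves. The identical argument for $t\to-\infty$ shows $h^{-}=h^{+}$ is constant along weak unstable leaves. Third comes the ``Fubini'' step: on $X\in\T$ the two horocycle foliations of $T^1X$ are real-analytic, transverse, and together with the flow direction span the tangent bundle, so one has smooth local coordinates adapted to them; a measurable function that is, on a full-measure set, constant along a.e.\ leaf of each of two such transverse smooth foliations is a.e.\ locally constant (iterate Fubini in these coordinates), hence a.e.\ constant by connectedness of $T^1X$. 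Applying this to $h=f$ and $h=g$ and forming the ratio finishes the proof, and the $T\to-\infty$ version follows by running everything for the time-reversed flow.

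The main obstacle is the third step: the interplay between the merely a.e.-defined Birkhoff averages and the leafwise versus transverse null sets of the two foliations. For variable negative curvature this is exactly where Anosov's absolute continuity theorem is needed and is the genuinely hard part; in the present constant-curvature case the foliations are smooth (indeed $T^1X=\Gamma\backslash\mathrm{PSL}(2,\R)$, with the leaves being orbits of the upper- and lower-triangular unipotent subgroups), so absolute continuity degenerates to an honest Fubini and the argument is routine. I would also note the slicker alternative available here, the Mautner phenomenon: an $L^2$ function invariant under the diagonal flow on $\Gamma\backslash\mathrm{PSL}(2,\R)$ is automatically invariant under both unipotent subgroups, hence under the group they generate, which is all of $\mathrm{PSL}(2,\R)$, hence constant; this yields ergodicity directly and is perhaps the cleanest route in this setting.
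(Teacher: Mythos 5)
The paper offers no proof of this statement: it is imported verbatim as Hopf's classical ``FIRST THEOREM'' and used as a black box, so there is nothing internal to compare your argument against. On its own terms your outline is the standard and correct route: reduce the ratio statement to ergodicity via Birkhoff (legitimate here because $X$ is closed, so the Liouville measure is finite and $\int g\,\rho_X^2\,dxdy>0$), run the Hopf chain of invariances of $h^{+}=h^{-}$ along weak stable and weak unstable horocycle leaves for uniformly continuous $h$, and then use that in constant curvature these foliations are smooth, so the ``absolute continuity'' step is genuinely just Fubini in adapted coordinates. The only point I would ask you to make explicit is the passage from ``$h^{+}$ is a.e.\ constant for every \emph{continuous} $h$'' to ergodicity of the flow (hence to the conclusion for the merely \emph{integrable} $f,g$ in the statement): this is the routine $L^1$-density argument using that the time-average operator is an $L^1$-contraction, but as written it is asserted rather than justified. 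Your closing observation via the Mautner phenomenon on $\Gamma\backslash\mathrm{PSL}(2,\mathbb R)$ is also correct and is arguably the cleanest proof in this constant-curvature setting; either route fully establishes the theorem as stated, including the $T\to-\infty$ version by time reversal.
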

One key step to obtain the inverse inequality to \eqref{eq.ext} is the following.
\begin{thm}\label{thm.Hopf}
For any conformal structure $\rho(z)|dz|$ on $X$, we have
\begin{equation}
\lim_{T\to\infty}\frac{L_{\rho}(G_{T}(v))}{\sqrt{\mathrm{Area}(\rho)}}\leq \frac{\pi}{2}\sqrt{\mathrm{Area}(X)}.\label{eq.Hopf1}
\end{equation}
In particular, we have
\begin{equation}
\frac{\ell_{\rho}(L_{X})^{2}}{\mathrm{Area}(\rho)}\leq \frac{~\pi^{2}}{4}\mathrm{Area}(X).\label{eq.Hopf2}
\end{equation}
\end{thm}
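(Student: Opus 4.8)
The plan is to prove the pointwise estimate \eqref{eq.Hopf1} by comparing the $\rho$-length of the approximating closed curves $G_T(v)$ with an integral along the geodesic flow trajectory, and then to apply Hopf's ergodic theorem (Theorem \ref{thm.Hopf-original}) together with the Cauchy--Schwarz inequality. The deduction of \eqref{eq.Hopf2} from \eqref{eq.Hopf1} will then be immediate: since $G_T(v)$ is, by construction, a rescaled closed curve $\frac{i(L_X,L_X)}{i(L_X,g_T(v))}g_T(v)$, continuity of $\ell_\rho$ on $\curr$ (Proposition \ref{prop.confext}) together with $G_T(v)\to L_X$ gives $\ell_\rho(L_X)=\lim_T L_\rho(G_T(v))$ for a suitable (full-measure) choice of $v$, and then \eqref{eq.Hopf1} rearranges to \eqref{eq.Hopf2} after squaring.

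For the main estimate, first I would write $\rho$ in Fermi-type or simply local coordinates and record that along a unit-speed hyperbolic geodesic trajectory $\varphi_t(v)$ the $\rho$-length element is $\rho(\varphi_t(v))\,dt$ (since the hyperbolic speed is one). Thus $L_\rho(g_T(v))$ differs from $\int_0^T \rho(\varphi_t(v))\,dt$ by at most a bounded error: $g_T(v)$ is the geodesic trajectory together with one connecting arc of $\rho$-length at most $\sup$ over the compact surface $X$ of the connecting-path length, which is a constant $C_\rho$ independent of $T$. Hence
\begin{equation}
L_\rho(G_T(v)) = \frac{i(L_X,L_X)}{i(L_X,g_T(v))}\Bigl(\int_0^T \rho(\varphi_t(v))\,dt + O(C_\rho)\Bigr).\notag
\end{equation}
By Proposition \ref{prop.ergodic}, $i(L_X,g_T(v)) = T + O(C)$, and $i(L_X,L_X)=\pi\,\mathrm{Area}(X)/2$ by Theorem \ref{thm.Bonahon}(4). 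So dividing by $\sqrt{\mathrm{Area}(\rho)}$ and letting $T\to\infty$, the problem reduces to showing
\begin{equation}
\limsup_{T\to\infty}\frac{1}{T}\int_0^T \rho(\varphi_t(v))\,dt \le \sqrt{\frac{\mathrm{Area}(\rho)}{\mathrm{Area}(X)}}\notag
\end{equation}
for $\mathcal{L}_X$-a.e.\ $v$. Now apply Hopf's theorem with $g\equiv 1$ and $f=\rho$ (if $\rho$ is integrable against $\rho_X^2\,dxdy$, which holds since $\rho$ is locally $L^2$ hence $L^1$ on the compact $X$): the limit equals $\frac{1}{\mathrm{Area}(X)}\int_X \rho\,\rho_X^2\,dxdy = \frac{1}{\mathrm{Area}(X)}\int_X \rho\,dxdy_{\mathrm{hyp}}$. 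Finally, Cauchy--Schwarz on the compact surface $X$ with its hyperbolic area measure gives $\int_X \rho\,dxdy_{\mathrm{hyp}} \le \bigl(\int_X \rho^2\,dxdy_{\mathrm{hyp}}\bigr)^{1/2}\,\mathrm{Area}(X)^{1/2}$; but $\int_X\rho^2\,dxdy_{\mathrm{hyp}} = \int_X\rho^2\,dxdy = \mathrm{Area}(\rho)$ because the hyperbolic area form and the Euclidean form differ by the conformal factor $\rho_X^2$, which cancels against one power when we write everything back in the flat coordinate. Combining, $\frac{1}{\mathrm{Area}(X)}\int_X\rho\,dxdy_{\mathrm{hyp}} \le \sqrt{\mathrm{Area}(\rho)/\mathrm{Area}(X)}$, which is exactly what is needed; multiplying by $\frac{\pi\,\mathrm{Area}(X)/2}{\sqrt{\mathrm{Area}(\rho)}}$ yields the bound $\frac{\pi}{2}\sqrt{\mathrm{Area}(X)}$.

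The main obstacle I anticipate is a measure-theoretic subtlety rather than a geometric one: Hopf's theorem gives the conclusion only for $\mathcal{L}_X$-almost every $v$, and one must make sure that \emph{some} such $v$ simultaneously has the property that $G_T(v)\to L_X$ in $\curr$ (so that $\ell_\rho(L_X)=\lim_T L_\rho(G_T(v))$) and that the exceptional null set where Hopf fails is avoided. The former holds for all $v$ with dense, ``generic'' geodesic trajectory, hence for a full-measure set, so the intersection of the two full-measure sets is nonempty and any $v$ in it works; I would state this carefully. A secondary technical point is justifying that $L_\rho(g_T(v))$ really is well approximated by the trajectory integral when $\rho$ is merely Borel and locally $L^2$ (not continuous) — here one uses that the parametrized geodesic flow preserves Liouville measure, so $t\mapsto \rho(\varphi_t(v))$ is finite a.e.\ and the integral $\int_0^T\rho(\varphi_t(v))\,dt$ is finite for a.e.\ $v$, and the connecting arc contributes a uniformly bounded amount by a standard (Fubini-type) argument over the compact $X$. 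With these points handled, the chain of inequalities above gives \eqref{eq.Hopf1}, and \eqref{eq.Hopf2} follows by squaring and using $\ell_\rho(L_X)\le \limsup_T L_\rho(G_T(v))$ for a good $v$.
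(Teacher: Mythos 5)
Your overall strategy is the paper's: identify the time average $\frac1T\int_0^T(\cdot)\,dt$ along the flow with the normalized $\rho$-length of $g_T(v)$, apply Hopf's ergodic theorem, and finish with Cauchy--Schwarz. But there is a concrete error in the conformal-factor bookkeeping that makes two of your intermediate claims false. Under Definition \ref{defi.ext} the metric is $\rho(z)|dz|$ with $\mathrm{Area}(\rho)=\int_X\rho^2\,dxdy$ in the \emph{flat} coordinate. A hyperbolic unit-speed parametrization satisfies $\rho_X(z(t))|z'(t)|=1$, so the $\rho$-length element along $\varphi_t(v)$ is $\rho(z(t))|z'(t)|\,dt=\bigl(\rho/\rho_X\bigr)(\varphi_t(v))\,dt$, not $\rho(\varphi_t(v))\,dt$. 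Consequently Hopf's theorem must be applied with $f=\rho/\rho_X$, $g=1$, giving the spatial average $\frac{1}{\mathrm{Area}(X)}\int_X\rho\,\rho_X\,dxdy$, and Cauchy--Schwarz applied to the factorization $\rho\cdot\rho_X$ yields $\int_X\rho\,\rho_X\,dxdy\le\sqrt{\mathrm{Area}(\rho)}\sqrt{\mathrm{Area}(X)}$, which is exactly the bound needed. Your version instead produces $\frac{1}{\mathrm{Area}(X)}\int_X\rho\,\rho_X^2\,dxdy$, and the identity you invoke to salvage it, $\int_X\rho^2\,\rho_X^2\,dxdy=\int_X\rho^2\,dxdy$, is simply false (the factor $\rho_X^2$ does not ``cancel against one power''); indeed the inequality $\frac{1}{\mathrm{Area}(X)}\int_X\rho\,\rho_X^2\,dxdy\le\sqrt{\mathrm{Area}(\rho)/\mathrm{Area}(X)}$ fails already for $\rho=c\,\rho_X$ unless $\int\rho_X^3\le\int\rho_X^2$. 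The two mistakes are the same convention slip (treating $\rho$ as a density relative to the hyperbolic metric rather than to $|dz|$), and once either is fixed the whole computation collapses onto the paper's chain \eqref{eq.Birkhoff}--\eqref{eq.est}.

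The remaining points you raise are sound and, if anything, more careful than the paper: choosing $v$ in the intersection of the two full-measure sets (Hopf-generic and $G_T(v)\to L_X$), checking integrability of $\rho/\rho_X$ from local $L^2$ and compactness, and deducing \eqref{eq.Hopf2} from \eqref{eq.Hopf1} via $\ell_\rho(G_T(v))\le L_\rho(G_T(v))$ and the continuity of $\ell_\rho$ on $\curr$. So the gap is not structural, but as written the central chain of inequalities does not prove \eqref{eq.Hopf1}; replace $\rho$ by $\rho/\rho_X$ in the length element and in the application of Theorem \ref{thm.Hopf-original}, and apply Cauchy--Schwarz to $\int_X\rho\,\rho_X\,dxdy$ rather than to $\int_X\rho\,\rho_X^2\,dxdy$.
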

\begin{proof}
For any allowable conformal metric $\rho$ on $X$, 
by Theorem \ref{thm.Hopf-original}  applied for $f=\rho/\rho_{X}$ and $g=1$, we have
\begin{equation}
\lim_{T\to\infty}\frac{1}{T}\int_{0}^{T}\frac{\rho(\varphi_{t}(v))}{\rho_{X}(\varphi_{t}(v))}dt
= \frac{1}{\mathrm{Area(X)}}\int_{X}\frac{\rho}{\rho_{X}}\cdot\rho_{X}^{2}dxdy.\label{eq.Birkhoff}
\end{equation}
By Theorem \ref{thm.Bonahon} and Proposition \ref{prop.ergodic}, 
we see that $|i(L_{X},g_{T}(v))-T|\leq C$.
Note that the integral $\int_{0}^{T}\frac{\rho(\varphi_{t}(v))}{\rho_{X}(\varphi_{t}(v))}dt$ equals $\rho$-length of the geodesic flow trajectory from $\varphi_{0}(v)$ to $\varphi_{T}(v)$, and hence there exists $C'=C'(\rho)\geq 0$ such that 
$$\left|\int_{0}^{T}\frac{\rho(\varphi_{t}(v))}{\rho_{X}(\varphi_{t}(v))}dt - L_{\rho}(g_{T}(v))\right|\leq C'.$$
Hence, the left-hand side of \eqref{eq.Birkhoff} is equal to 
\begin{equation}
\lim_{T\to\infty}\frac{1}{i(L_{X},g_{T}(v))}\cdot L_{\rho}(g_{T}(v)).
\end{equation}

By the Cauchy-Schwarz inequality, the right-hand side of \eqref{eq.Birkhoff} is
\begin{align}
\frac{1}{\mathrm{Area(X)}}\int_{X}\rho\cdot\rho_{X}dxdy
\leq \frac{1}{\mathrm{Area(X)}}\sqrt{\int_{X}\rho^{2}dxdy\cdot \int_{X}\rho_{X}^{2}dxdy} = \frac{\sqrt{\mathrm{Area}(\rho)}}{\sqrt{\mathrm{Area}(X)}}
\end{align}
Hence by \eqref{eq.Birkhoff}, we have
\begin{align}
\nonumber&\lim_{T\to\infty}\frac{1}{i(L_{X},g_{T}(v))}\cdot \frac{L_{\rho}(g_{T}(v))}{\sqrt{\mathrm{Area}(\rho)}}\leq \frac{1}{\sqrt{\mathrm{Area}(X)}}\\\nonumber
\iff &\lim_{T\to\infty}\frac{i(L_{X},L_{X})}{i(L_{X},g_{T}(v))}\cdot \frac{L_{\rho}(g_{T}(v))}{\sqrt{\mathrm{Area}(\rho)}}\leq \frac{i(L_{X},L_{X})}{\sqrt{\mathrm{Area}(X)}}\\\nonumber
\iff &\lim_{T\to\infty}\frac{L_{\rho}(G_{T}(v))}{\sqrt{\mathrm{Area}(\rho)}}\leq \frac{\pi}{2}\sqrt{\mathrm{Area}(X)}.
\label{eq.est}
\end{align}
This completes the proof of the inequality \eqref{eq.Hopf1}.

The inequality \eqref{eq.Hopf2} follows as $\ell_{\rho}$ extends continuously to $\curr$ (Proposition \ref{prop.confext}), 
and $\ell_{\rho}(G_{T}(v))\leq L_{\rho}(G_{T}(v))$.
\end{proof}
\begin{rmk}\label{rmk.subtle}
For any $\mu\in\curr$, if we had
\begin{equation}
\sqext(\mu) = \sup_{\rho}\frac{\ell_{\rho}(\mu)}{\sqrt{\mathrm{Area}(\rho)}}\label{eq.hopeext}
\end{equation}
then Proposition \ref{prop.lower} and Theorem \ref{thm.Hopf} would give Theorem \ref{thm.main}.

However, since the pointwise supremum of a family of continuous functions is generally only lower semicontinuous, equation \eqref{eq.hopeext} requires careful handling.
The rest of the paper will be devoted to this issue, verifying that \eqref{eq.hopeext} is indeed valid.
\end{rmk}
\section{Upper bound}
To prove Theorem \ref{thm.main}, we briefly recall the work of Mart\'inez-Granado and Thurston, readers are referred to \cite{GT} for details.
\subsection{Return trajectories}
We will follow the notation of \cite{GT} as closely as possible.
Let $Y:=T_{1}X$ denote the unit tangent bundle of $X$.
The 3-manifold $Y$ admits a natural geodesic flow $\phi_{t}$ via the hyperbolic structure on $X$.
In \cite[Section 8]{GT}, they showed the existence of a so-called global cross-section $\tau\subset Y$.
The $\tau$ satisfies
\begin{itemize}
\item $\tau$ is a compact smooth codimension $1$ submanifold-with-boundary 
that is smoothly transverse to the foliation of $Y$ given by $\phi_{t}$.
\item for any $y\in Y$, there exist $s<0<t$  such that $\phi_{s}(y)\in\tau, \phi_{t}(y)\in\tau$.
\item $\tau$ is the image of an immersion of a disk.
\end{itemize}
We then have the first return map $p:\tau\to\tau$.
In short, \( \tau \) is constructed as a “wedge set” from a closed curve \( \delta\subset X \) and a small interval \( I \subset \delta \) by:
\begin{equation}
\tau = \{(x,v)\in T^{1}X\mid 
x\in\delta\setminus I, ~|\angle(T_{x}\delta, v)-\pi/{2}|\leq\pi/{6}\}.
\end{equation}
(Angles $\angle(v,w)$ are measured by the counterclockwise rotation from $v$ to $w$).

However, the continuity of $p:\tau\to\tau$ breaks down along the boundary $\partial\tau$.
To overcome this difficulty, nested global cross-sections
$\tau_{0}\Subset \tau\subset\tau'$ are considered, 
where $\tau_{0}\Subset\tau$ means that $\partial\tau_{0}$ is contained in the interior of $\tau$.
Then a continuous bump function $\psi:\tau \to [0,1]$ (\cite[Section 7]{GT}) with the property that $\psi$ is $1$ on $\tau_{0}$ and $0$ on an open neighborhood of $\partial \tau$ is considered.

Given a topological space $M$, 
let $\R_{1}M$ be the space of Borel measures with finite support and total mass $1$ on $M$.
Using $\psi$, a map $P:\tau\to\R_{1}\tau$ is defined inductively by 
\[
P(x) :=
\begin{cases}
\,p(x) & p(x)\in \tau_{0} \\[6pt]
\,\psi(p(x))\cdot p(x) + (1-{\psi}(p(x))\cdot P(p(x)) & p(x)\in \tau \setminus \tau_{0}\,.
\end{cases}
\]

Then it is shown that $P$ is continuous \cite[Proposition 7.7]{GT}.
The return trajectory is defined as follows.
\begin{defi}[{\cite[Definition 7.17]{GT}}]
Let $\phi_t$ be the geodesic flow on $Y$ and let $\tau$ be a global cross-section contained in a larger compact simply connected cross-section $\tau'$. 
Fix a basepoint $*\in\tau'$. 
For $x\in\tau$, define the \emph{return trajectory} $m(x)\in\pi_1(Y,*)$ by taking the homotopy class of a path that runs in $\tau'$ from $*$ to $x$, along the flow trajectory from $x$ to $p(x)$, and then in $\tau'$ from $p(x)$ back to $*$. 
Since $\tau$ is the image of an immersion of a disc, 
$m(x)$ is independent of the choice of path.
\end{defi}

\begin{defi}[{\cite[Definition 7.19]{GT}}]
The \emph{homotopy return map} is the map 
$q : \tau \to \tau \times \pi_1(Y,*)$ defined by
\[
q(x) := (p(x), m(x)).
\]

We can iterate $q$ by inductively defining $q^{n+1}$ to be the composition
\[
\tau \xrightarrow{q^n} \tau \times \pi_1(Y) \xrightarrow{q \times \mathrm{id}} \tau \times \pi_1(Y) \times \pi_1(Y) \xrightarrow{(x,g,h)\mapsto(x,hg)} \tau \times \pi_1(Y).
\]

Define $m^n(x)\in \pi_1(Y,*)$ to be the second component of $q^n(x)$.
\end{defi}
\begin{defi}[{\cite[Definition 7.21]{GT}}]
The \emph{smeared homotopy return map} 
\[
Q : \tau \to \mathbb{R}_1(\tau \times \pi_1(Y,*))
\]
is defined by
\[
Q(x) := 
\begin{cases}
\,q(x) & p(x)\in \tau_0 \\[6pt]
\,\psi(p(x)) \cdot q(x) + \left(1-\psi(p(x))\right) \cdot L_{m(x)} Q(p(x)) & p(x)\in \tau - \tau_0
\end{cases}
\]
where $L_g$ is left translation by $g \in \pi_1(Y,*)$:
\[
L_g\left(\sum_i a_i(x_i,h_i)\right) := \sum_i a_i(x_i,gh_i).
\]
\end{defi}
Iteration of $Q$ is also well defined, see \cite{GT} for the details.

\begin{defi}[{\cite[Definition 7.22]{GT}}]
We define the \emph{smeared $n$-th return trajectory} 
\[
M^n : \tau \to \mathbb{R}_1\pi_1(Y)
\]
to be the composition
\[
\tau \xrightarrow{Q^n} \mathbb{R}_1(\tau\times\pi_1(Y)) \longrightarrow \mathbb{R}_1\pi_1(Y)
\]
where at the second step we lift the projection on the second component to act on weighted objects (see \cite[Definition 7.8]{GT}).
Let $\Lambda(n,\tau)$ be the set of curves that appear with non-zero coefficient in $M^n(x)$ for some $x\in\tau$.
\end{defi}
By appealing to the compactness of $Y$ and the length bound for curves in $\Lambda(n,\tau)$, it is proved in \cite[Lemma 7.23]{GT} that 
$\Lambda(n,\tau)$ is finite.
It is also proved that $Q^{n}$, $M^{n}$ are continuous in \cite[Lemma 7.24]{GT}.

The curves $M^{k}(x)$ project to a weighted multi-curve on $X$.
Thus, we obtain
$$[M^{k}(\cdot)]: \tau \to \{\text{weighted multi-curves on $X$}\}\subset\curr.$$

Note that each geodesic current $\mu\in\curr$ is invariant under geodesic flow and hence descends to a measure on a global cross-section $\tau$ of the geodesic flow $\phi_{t}$. 
We use the same notation $\mu$ for the measure on $\tau$. 

By the finiteness of $\Lambda(n,\tau)$, we see that
$$\int_{\tau} [M^{n+k}(x)]\psi(x)\mu(x)$$
is a weighted multi-curve in $\curr$.
The following join lemma is very useful.
A smoothing is a local operation on intersections of curves:~~~
\begin{tikzpicture}[scale=0.15]
\begin{scope}[shift={(-3,0)}]
    \draw[-{Latex[scale=0.5]}] (-1,-1) -- (1,1);
    \draw[-{Latex[scale=0.5]}] (-1,1) -- (1,-1);
\end{scope}

\draw[thick,-{Latex[scale=0.5]}] (-0.8,0) -- (0.8,0);

\begin{scope}[shift={(3,0)}]
    \draw[-{Latex[scale=0.5]}] (-1,1) .. controls (0,0.2) .. (1,1);
    \draw[-{Latex[scale=0.5]}] (-1,-1) .. controls (0,-0.2) .. (1,-1);
\end{scope}
\end{tikzpicture}
.
\begin{lem}[{\cite[Lemma 9.2 (Smeared join lemma)]{GT}}]\label{lem.join}
Let \(\tau\) be a global cross-section. There is a curve \(K_\tau\) and an integer \(w_\tau\) such that for large enough \(n, k \geq 0\), we have, for all \(x \in \tau\),
\begin{itemize}
  \item[(a)] \([M^{n}(x)] \cup [M^{k}(P^{n}(x))] \cup K_\tau \searrow_{w_\tau} [M^{n+k}(x)]\),
  \item[(b)] \([M^{n+k}(x)] \cup K_\tau \searrow_{w_\tau} [M^{n}(x)] \cup [M^{k}(P^{n}(x))]\).
\end{itemize}
where $\searrow_{w_\tau}$ means that the right-hand side is obtained from the left-hand side by smoothing $w_{\tau}$ crossings.
\end{lem}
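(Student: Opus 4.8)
The proof is that of \cite[Lemma 9.2]{GT}; I sketch the shape of the argument one would carry out.

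The plan is to treat the non-smeared iterates first. From the inductive definition of $q^{n}$ one gets that $m^{n+k}(x)$ is the product, in $\pi_{1}(Y,\ast)$, of $m^{n}(x)$ and $m^{k}(p^{n}(x))$, and that each $m^{N}(x)$ is represented by a loop at $\ast$ of a fixed combinatorial type: an arc in $\tau'$ from $\ast$ to $x$, the geodesic flow trajectory from $x$ to $p^{N}(x)$ crossing $\tau$ exactly $N$ times, and an arc in $\tau'$ from $p^{N}(x)$ back to $\ast$. Put the three free loops $[m^{n}(x)]$, $[m^{k}(p^{n}(x))]$, $[m^{n+k}(x)]$ in general position. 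Near the return point $p^{n}(x)$ the flow strand of $[m^{n}(x)]$ ends and that of $[m^{k}(p^{n}(x))]$ begins; a single smoothing there splices them into the long flow trajectory of $[m^{n+k}(x)]$ and cuts off a short loop made from the two $\tau'$-arcs $p^{n}(x)\rightarrow\ast$ and $\ast\rightarrow p^{n}(x)$. Near $\ast$, boundedly many further smoothings reconnect the remaining $\tau'$-arcs. Since $\tau'$ is compact and simply connected, the cut-off loop and the reconnected leftovers are homotopic into $\tau'$ and hence form a fixed finite multi-curve, recorded once and for all as $K_{\tau}$, while the number of smoothings used is a constant $w_{\tau}$; running the same local move in reverse yields the other direction. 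This proves (a) and (b) for the maps $x\mapsto[m^{n}(x)]$.

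The substantive step is passing from $m^{\bullet}$ to the smeared maps $M^{\bullet}$. Here $Q^{n}(x)$ is a finitely supported measure on $\tau\times\pi_{1}(Y)$, and I would verify: (i) the $\tau$-coordinates occurring in $\operatorname{supp}Q^{n}(x)$, which take over the role of $p^{n}(x)$ above, remain in a subset of $\tau$ whose size is bounded independently of $n$; and (ii) every curve occurring with nonzero weight in $M^{n}(x)$, $M^{k}(P^{n}(x))$ or $M^{n+k}(x)$ lies in a part of $Y$ of uniformly bounded diameter, so that only boundedly many flow strands and $\tau'$-arcs enter the two interaction regions. The nesting $\tau_{0}\Subset\tau\subset\tau'$ and the cut-off function $\psi$ are engineered exactly so that (i) holds, and the length bound underlying the finiteness of $\Lambda(n,\tau)$ gives (ii). Granting (i)--(ii), the splicing move above applies strand by strand with $w_{\tau}$ and $K_{\tau}$ still uniform; compatibility with the smearing is automatic, because the left translations $L_{g}$ in the definition of $Q$ encode precisely the prepending of flow segments, so that $M^{n+k}(x)$ and the ``concatenation'' of $M^{n}(x)$ with $M^{k}(P^{n}(x))$ differ, strand by strand, only by the bounded local move.

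The step I expect to be the main obstacle is (i): a priori the recursive smearing could spread $\operatorname{supp}Q^{n}$ across $\tau$ as $n$ grows, forcing $K_{\tau}$ and $w_{\tau}$ to depend on $n$ and breaking the statement. Controlling this is the whole point of the careful construction of global cross-sections and cut-off functions in \cite[Sections 7--9]{GT}, which I would invoke rather than reprove.
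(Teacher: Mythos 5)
This lemma is not proved in the paper at all: it is imported verbatim as \cite[Lemma 9.2]{GT}, so there is no internal argument to compare your sketch against, and your decision to defer the substantive verification to \cite{GT} is exactly what the paper itself does. Your outline of how that proof goes is broadly faithful: the unsmeared join comes from the cocycle identity $m^{n+k}(x)=m^{k}(p^{n}(x))\,m^{n}(x)$ together with the fact that each junction (the two $\tau'$-arcs through the basepoint at $p^{n}(x)$) lives inside the compact, simply connected $\tau'$, so it can be resolved by a number of smoothings and an auxiliary curve $K_\tau$ that are uniform in $n,k$ and $x$; the passage to $M^{\bullet}$ uses that the left translations $L_g$ in the definition of $Q$ make the smearing multiplicative, so the join applies atom by atom of the finitely supported measures. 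One remark: the obstacle you single out as the main worry, namely that $\operatorname{supp}Q^{n}(x)$ might ``spread across $\tau$'' as $n$ grows, is not actually where the difficulty lies --- by definition $Q^{n}(x)$ is supported in $\tau\times\pi_1(Y)$ with $\tau$ compact, and the junction construction takes place entirely in $\tau'$, so uniformity of $w_\tau$ and $K_\tau$ is automatic once each single join is bounded; the genuine content in \cite{GT} is rather the compatibility of the recursive smearing with concatenation (which you do also identify) and the finiteness/length bounds for $\Lambda(n,\tau)$. Since both you and the paper treat the lemma as a black box from \cite{GT}, there is no substantive gap to report, but be aware that your sketch is an outline of someone else's proof rather than a self-contained argument.
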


Let  $f$ be a length function defined on all closed curves on $X$ satisfying certain conditions (see \cite[Theorem A]{GT} for the details).
The same argument as in \cite[Proposition 9.4]{GT} applies to our situation. 
Note that we do not consider ``quasi-smoothings'' here (see \cite{GT} for details), and hence inequalities are simpler than those in \cite{GT}.
For sufficiently large $n, k$:
\begin{align}
f^{n+k}(\mu) 
:= & f\left(\int_{\tau} [M^{n+k}(x)] \psi(x)\mu(x)\right)\label{eq.4.1}
\\
\le & f\left(\int_{\tau}[M^n(x)] \psi(x)\mu(x)\right) + f\left(\int_{\tau}[M^{k}(P^n(x))] \psi(x)\mu(x)\right) +f(K_\tau)A_\tau(\mu)
\label{eq.4.2}\\
= & f^n(\mu)+f\left(\int_{\tau}[M^{k}(x)]P^n_{*}( \psi(x)\mu(x))\right)+f(K_\tau)A_\tau(\mu)
\label{eq.4.3}\\
= & f^n(\mu)+f^k(\mu)+f(K_\tau)A_\tau(\mu)\label{eq.4.4}
\end{align}
where $A_{\tau}(\mu) = \int_{\tau}\psi(x)\mu(x)$.
The inequality \eqref{eq.4.2} is due to Lemma \ref{lem.join} (a), together with the fact that the length decreases after smoothing, and convex union property of $f$, namely $f(\alpha\cup\beta)\leq f(\alpha)+f(\beta)$, which is satisfied when $f(\cdot) = \sqext(\cdot)$ (\cite[Lemma 4.17.]{GT}).
The equality \eqref{eq.4.4} follows from the invariance of $\psi\mu$ under $P$ \cite[Proposition 7.16]{GT}, namely
{
\begin{equation}
\int_{\tau}[M^{k}(P^n(x))]\psi(x)\mu(x) = \int_{\tau}[M^{k}(x)]P^n_{*}(\psi(x)\mu(x))= \int_{\tau}[M^{k}(x)]\psi(x)\mu(x)=: \Lambda(k,\mu)\label{eq.Pinv}
\end{equation}
}
as a weighted multi-curve.

The equation \eqref{eq.4.4} corresponds to the subadditivity.
By using the subadditivity, it is proved that : 
\begin{thm}[{\cite[Proposition 9.6, Proposition 10.8, Proposition 11.5 and Theorem 13.1]{GT}}]\label{thm.MGT-main}
Suppose that $f$ satisfies certain natural conditions (see \cite{GT} for the details, the conditions are satisfied by $\sqext$ and $\ell_{\rho}$ for any conformal metric $\rho$).
Then the limit
$$
f_{\tau}(\mu) = \lim_{n\to\infty}\frac{f^{n}(\mu)}{n}
$$
exists and we have $f_{\tau}(\mu) = f(\mu)$ when $\mu$ corresponds to a weighted closed curve.
The function $f_{\tau}:\curr\to\R$ is the continuous extension of $f$ to $\curr$.
\end{thm}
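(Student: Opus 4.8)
The plan is to obtain all three assertions from the subadditivity \eqref{eq.4.4} together with the abstract properties of $f$ that entered its proof: homogeneity $f(\lambda c)=\lambda f(c)$ for $\lambda>0$, the convex union inequality $f(\alpha\cup\beta)\le f(\alpha)+f(\beta)$, monotonicity of $f$ under smoothing, nonnegativity, and the finiteness and continuity of the data $x\mapsto[M^{n}(x)]$ on the compact cross-section $\tau$ (so that $\mu\mapsto\int_{\tau}[M^{n}(x)]\psi(x)\mu(x)$ is a weighted multi-curve supported on the finite set $\Lambda(n,\tau)$ and depends continuously on $\mu$). For $f=\sqext$ these are the content of \cite{GT} (the convex union inequality is \cite[Lemma 4.17]{GT}), and for $f=\ell_{\rho}$ they are immediate since $\ell_{\rho}$ is linear on weighted multi-curves.

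First I would prove that the limit exists. Put $C(\mu):=f(K_{\tau})A_{\tau}(\mu)\ge 0$ and $b_{n}:=f^{n}(\mu)+C(\mu)$. Then \eqref{eq.4.4} says $b_{n+k}\le b_{n}+b_{k}$ for all sufficiently large $n,k$; after absorbing the finitely many small indices — which is harmless because $f^{1}(\mu)<\infty$, the integral $\int_{\tau}[M^{1}(x)]\psi(x)\mu(x)$ being an honest weighted multi-curve supported on $\Lambda(1,\tau)$ — Fekete's lemma gives $b_{n}/n\to\inf_{n}b_{n}/n$, and this infimum is finite since $f\ge 0$ forces $b_{n}/n\ge 0$. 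As $C(\mu)/n\to 0$, the limit $f_{\tau}(\mu)=\lim_{n}f^{n}(\mu)/n$ exists, is a nonnegative real number, and equals $\inf_{n}\bigl(f^{n}(\mu)/n+C(\mu)/n\bigr)$.

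Next I would identify $f_{\tau}$ with $f$ on weighted multi-curves. Following \cite[Propositions 10.8 and 11.5]{GT}, one iterates Lemma \ref{lem.join} to compare, for a weighted multi-curve $c$, the multi-curve $\int_{\tau}[M^{n}(x)]\psi(x)c(x)$ with the $n$-fold multiple $n\cdot c$: up to smoothing a number of crossings bounded independently of $n$ and adjoining a fixed curve built from $K_{\tau}$, the two agree. Homogeneity, the convex union inequality, and monotonicity under smoothing then bound $|f^{n}(c)-n\,f(c)|$ independently of $n$, so dividing by $n$ yields $f_{\tau}(c)=f(c)$.

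Finally I would establish continuity and conclude. For each fixed $n$, $\mu\mapsto f^{n}(\mu)$ is continuous on $\curr$: by finiteness of $\Lambda(n,\tau)$ the integral $\int_{\tau}[M^{n}(x)]\psi(x)\mu(x)$ lives in the finite-dimensional cone of weighted multi-curves supported on that finite set, it depends continuously on $\mu$ (the current restricts to a finite measure on the compact $\tau$ depending weak-$*$ continuously on $\mu$, and $[M^{n}(\cdot)],\psi$ are continuous and bounded there), and $f$ is continuous on that cone — by Proposition \ref{prop.confext} for $\sqext$ and by linearity for $\ell_{\rho}$; likewise $C(\mu)$ depends continuously on $\mu$. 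Hence $f_{\tau}=\inf_{n}\bigl(f^{n}/n+C/n\bigr)$ is an infimum of continuous functions and so upper semicontinuous. For lower semicontinuity I would rerun the bookkeeping of \eqref{eq.4.1}--\eqref{eq.4.4} with part (b) of Lemma \ref{lem.join} in place of part (a), extracting a matching lower bound for $f^{n+k}$ in terms of $f^{n}$ and $f^{k}$ and thereby writing $f_{\tau}$ as a supremum of continuous functions; this is exactly the argument of \cite[Proposition 9.6 and Theorem 13.1]{GT}, and one checks it invokes only the properties of $f$ listed above. Being both upper and lower semicontinuous, $f_{\tau}$ is continuous; since it agrees with $f$ on the weighted multi-curves, which are dense in $\curr$ by Theorem \ref{thm.Bonahon}, it is the unique continuous extension of $f$, hence coincides with the extension of Proposition \ref{prop.confext}. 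I expect the lower semicontinuity step to be the main obstacle: the convex union inequality is one-directional, so a reverse bound does not drop out of Lemma \ref{lem.join}(b) by a symmetric one-line argument and genuinely requires the more careful reasoning of \cite{GT}.
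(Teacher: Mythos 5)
This theorem is imported from [GT] — the paper itself gives no proof beyond deriving the subadditivity \eqref{eq.4.1}--\eqref{eq.4.4} and then citing Propositions 9.6, 10.8, 11.5 and Theorem 13.1 of [GT] — and your sketch follows exactly that route: Fekete's lemma applied to $f^{n}(\mu)+f(K_{\tau})A_{\tau}(\mu)$ for existence of the limit, the iterated join lemma for agreement with $f$ on weighted multi-curves, and the inf/sup representations for upper and lower semicontinuity, deferring the genuinely hard lower-semicontinuity step to [GT] just as the paper does. So the proposal is correct and takes essentially the same approach as the paper, at a comparable (indeed slightly greater) level of detail.
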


\subsection{Proof of Theorem \ref{thm.main}}
Now we consider the case where $f=\sqext$.
Let $\mu$ be a geodesic current and let $\Lambda_{n}:=\Lambda(n, \mu)/n$.
Then we have 
\begin{equation}
\sqext(\mu) = \lim_{n\to\infty}\sqext(\Lambda_{n}), \text{ and } 
\ell_{\rho}(\mu) = \lim_{n\to\infty}\ell_{\rho}(\Lambda_{n}).\label{eq.Lambda}
\end{equation}
by Theorem \ref{thm.MGT-main}.
Using the definition of extremal length,
for any $\epsilon>0$, we have a conformal metric $\rho_{n}$ such that
$$
\sqext\left( \Lambda_{n}\right) \leq 
\ell_{ \rho_{n}}\left( \Lambda_{n}\right)+\epsilon.
$$
By the definition of extremal length, we have for $K_{\tau}$ in Lemma \ref{lem.join}
\begin{equation}
\ell_{\rho_{n}}(K_{\tau})\leq \sqext(K_{\tau})
\end{equation}
for any $n\in\N$.

Let $A_{\tau}:=A_\tau(\mu)$. By Lemma \ref{lem.join} with part (a) applied at \eqref{eq.3.6} and part (b) applied at \eqref{eq.3.8}, each used \( I \) times, and noticing the fact that $$\ell_{\rho}(\alpha\cup\beta) = \ell_{\rho}(\alpha)+\ell_{\rho}(\beta)$$ for any conformal metric $\rho$, we obtain
\begin{align}
\sqext(\Lambda_{nI}) &= \sqext\left(\int_{\tau}\frac{[M^{nI}(x)]}{nI}\psi(x)\mu(x)\right) 
\nonumber\\
&\leq \sum_{i=0}^{I-1}\sqext\left(\int_{\tau}\frac{[M^{n}(P^{ni}(x))] }{nI}\psi(x)\mu(x)
\right) + \frac{A_{\tau}\sqext(K_{\tau})\cdot I}{nI}\label{eq.3.6}\\
&\leq
\sum_{i=0}^{I-1}\left\{\ell_{\rho_{n}}\left(\int_{\tau}\frac{[M^{n}(P^{ni}(x))] }{nI}\psi(x)\mu(x)\right)+ \frac{\epsilon}{I}\right\} +\frac{A_{\tau}\sqext(K_{\tau})}{n}\label{eq.Pinv-used}\\
&\leq \ell_{ \rho_{n}}\left(\int_{\tau}\frac{[M^{nI}(x)]}{nI} \psi(x)\mu(x)\right)  +  
\epsilon+\frac{A_{\tau}\sqext(K_{\tau})}{n} + \frac{A_{\tau}\ell_{\rho_{n}}(K_{\tau})}{n}\label{eq.3.8}\\
&\leq \ell_{ \rho_{n}}\left(\Lambda_{nI}\right) +\epsilon+  
\frac{2A_{\tau}\sqext(K_{\tau})}{n} \label{eq.4.8}
\end{align}
where the $P$ invariance of measures \eqref{eq.Pinv} is used to get \eqref{eq.Pinv-used}.
Then by \eqref{eq.Lambda},
\begin{align}
\sqext(\mu)
&=\lim_{I\to\infty}\sqext(\Lambda_{nI})\\
&\leq \lim_{I\to\infty}\ell_{ \rho_{n}}\left(\Lambda_{nI}\right) + \epsilon +
\frac{2A_{\tau}\sqext(K_{\tau})}{n}\\
&= \ell_{\rho_{n}}(\mu) +\epsilon + \frac{2A_{\tau}\sqext(K_{\tau})}{n}\\
&\leq \sup_{\rho}\frac{\ell_{\rho}(\mu)}{\sqrt{\mathrm{Area}(\rho)}} + \epsilon +\frac{2A_{\tau}\sqext(K_{\tau})}{n}.
\end{align}
As $n$ can be taken arbitrarily large and $\epsilon$ can be arbitrarily small, we have
$$\sqext(\mu)\leq \sup_{\rho}\frac{\ell_{\rho}(\mu)}{\sqrt{\mathrm{Area}(\rho)}}$$
Putting together with Proposition \ref{prop.lower}, we complete the proof of 
Theorem \ref{thm.main}. \qed

%
%

\appendix
\section{Diameter of extremal metrics}\label{sec.extremal metric}
Given a weighted multi-curve $c$ and $X\in\T$, a conformal metric $\rho$ is called an {\em extremal metric} if
$$
\ext(c) = \frac{\ell_{\rho}(c)^{2}}{\mathrm{Area}(\rho)}.
$$
In other words, the supremum defining the extremal length is attained by the metric  $\rho$.

When $c$ is simple, an extremal metric is given by a quadratic differential \cite{Jenkins}, however very little is known about extremal metrics of non-simple curves, see \cite[Section 4.8]{GT}, and \cite{HZ} and references therein.
To the best of augthor's knowlege, alghtough in \cite[Theorem 12]{Burton}, the existence of so called ``general extremal metrics'' is proved, the existence of extremal metrics is unknown for non-simple case.
In this appendix, we assume the existence of such extremal metrics, and give an upper bound on the diameter of these extremal metrics whose area is normalized to be $1$ (see Theorem \ref{thm.extbound}).

We first observe that the extremality for conformal metrics
is necessary to have a bounded diameter.
\begin{ex}[Hyperbolic punctured disc]
Consider $B(0,1/e)$, the ball of radius $1/e$ with center $0$.
The conformal metric 
$$
\rho(z):=\frac{1}{|z|\log(1/|z|)}
$$
has $\mathrm{Area}(\rho) = 2\pi$ and infinite diameter.
The length of the boundary equals $2\pi e$.
These can be verified by integrating in polar coordinates and making the change of variables \( r = e^{-u} \).
This example shows that, in general, conformal metrics may have infinite diameters even when the area is bounded.
\end{ex}

\subsection{Properties of extremal metrics}
From now on, we fix $X\in\T$.
\begin{lem}\label{lem.extconf}
Suppose that $\rho$ is an extremal metric for some weighted multi-curve $c$.
Then for any $p\in X$ and any simply connected open neighborhood $p\in U\subset X$, and for any $\epsilon>0$ we have the following:
\begin{itemize}
\item[($\ast$)] there must exist a representative $\gamma$ of some connected component of $c$ such that $\gamma\cap U\neq\emptyset$ and $L_{\rho}(\gamma)\leq \ell_{\rho}(\gamma)+\epsilon$.
\end{itemize}
\end{lem}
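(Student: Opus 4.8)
The plan is to argue by contradiction: suppose some point $p\in X$, some simply connected neighborhood $U\ni p$, and some $\epsilon>0$ witness the \emph{failure} of ($\ast$), i.e. every representative $\gamma$ of every component of $c$ that meets $U$ satisfies $L_\rho(\gamma)>\ell_\rho(\gamma)+\epsilon$. Intuitively this says that no ``almost length-minimizing'' representative of any component of $c$ passes through $U$, so the metric $\rho$ is wastefully large near $p$ and we can shave it down on a small disk around $p$ while (i) strictly decreasing the area and (ii) not decreasing $\ell_\rho(c)$, contradicting extremality. First I would fix, for each component $c_j$ of $c$ (finitely many), a length-minimizing (or near-minimizing) representative $\gamma_j^0$; by the hypothesized failure of ($\ast$) these representatives either avoid $U$ altogether or, if forced to enter, they do so only through a ``definitely suboptimal'' route — the point is that there is a uniform $\epsilon$-gap, so a small perturbation of $\rho$ supported in a tiny disk $B\Subset U$ changes $\ell_\rho(\gamma_j')$ by less than $\epsilon$ for \emph{every} representative $\gamma_j'$, hence changes the infimum $\ell_\rho(c_j)$ by at most that amount and in particular keeps it achieved (up to $\epsilon$) by representatives that stay outside $B$.

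The key construction is the modified metric. Choose a small $r>0$ with $B:=B_\rho(p,r)\Subset U$ (using that $\rho$, being locally $L_2$, has finite area on $U$, and shrinking $r$ so that the $\rho$-area of $B$ is as small as we like — here I would need a small lemma that $\rho$-balls of small radius have small area, or simply work with coordinate balls and the local $L_2$ bound). Define $\rho_t := \rho$ on $X\setminus B$ and $\rho_t := (1-t)\rho$ on $B$, for small $t>0$. Then $\mathrm{Area}(\rho_t) = \mathrm{Area}(\rho) - (1-(1-t)^2)\,\mathrm{Area}_\rho(B) < \mathrm{Area}(\rho)$ strictly. For the length side: any representative $\gamma_j'$ of $c_j$ has $L_{\rho_t}(\gamma_j') \ge L_\rho(\gamma_j') - t\,L_\rho(\gamma_j'\cap B) \ge L_\rho(\gamma_j') - t\,(\mathrm{diam}\text{-type bound})$; but more to the point, representatives that \emph{avoid} $B$ are unchanged, and by the $\epsilon$-gap hypothesis the infimum over representatives meeting $U$ (hence a fortiori meeting $B$) is bounded below by $\ell_\rho(c_j)+\epsilon$ in the $\rho$-metric, so for $t$ small enough their $\rho_t$-length still exceeds $\ell_\rho(c_j)=\ell_{\rho_t}(c_j$ via avoiding representatives$)$. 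Hence $\ell_{\rho_t}(c) \ge \ell_\rho(c)$ while $\mathrm{Area}(\rho_t)<\mathrm{Area}(\rho)$, so $\ell_{\rho_t}(c)^2/\mathrm{Area}(\rho_t) > \ell_\rho(c)^2/\mathrm{Area}(\rho) = \ext(c)$, contradicting the definition of $\ext(c)$ as a supremum.

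I would then just need to be careful that the hypothesized failure of ($\ast$) really does give a \emph{uniform} gap over \emph{all} representatives meeting $U$, not merely over some chosen ones: the statement of ($\ast$) quantifies ``there must exist a representative $\gamma$ with $\gamma\cap U\neq\emptyset$ and $L_\rho(\gamma)\le\ell_\rho(\gamma)+\epsilon$'', so its negation is exactly ``for all representatives $\gamma$ with $\gamma\cap U\neq\emptyset$, $L_\rho(\gamma)>\ell_\rho(\gamma)+\epsilon$'', which is the uniform statement I want (here $\ell_\rho(\gamma)$ denotes the infimum over the homotopy class of the component that $\gamma$ represents). One subtlety: the infimum $\ell_\rho(c_j)$ is over a homotopy class and may not be attained, so I should phrase the argument with near-minimizers and absorb an extra $\epsilon/2$; another subtlety is ensuring $B$ is small enough in $\rho$-area that the strict area decrease is not swamped — but since we only need \emph{some} strict decrease against no increase in length, any $t>0$ works once $B$ is chosen.

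\medskip

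\emph{The main obstacle} I anticipate is the measurability/regularity bookkeeping for the metric $\rho$, which is only assumed Borel and locally $L_2$: I need that $\rho$-balls (or coordinate balls) of small radius genuinely have small $\rho$-area and that cutting $\rho$ down on such a ball still yields an allowable conformal metric (Borel, nonnegative, locally $L_2$, finite nonzero area) — all routine but requiring care. The conceptual heart, that a uniform $\epsilon$-gap lets a compactly supported shrink of $\rho$ beat the extremal ratio, is straightforward once the quantifiers are read correctly.
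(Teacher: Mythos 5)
Your contradiction-and-perturbation argument is essentially the second half of the paper's proof, and in the case $\mathrm{Area}_\rho(B)>0$ it is correct: the quantifier analysis (the negation of ($\ast$) gives a uniform $\epsilon$-gap for \emph{all} representatives meeting $U$, hence for all meeting $B\subset U$), the estimate $L_{\rho_t}(\gamma)\ge(1-t)L_\rho(\gamma)\ge(1-t)(\ell_\rho(\gamma)+\epsilon)$ for representatives meeting $B$, and the uniform choice of $t$ over the finitely many components of $c$ all go through; working with coordinate balls rather than $\rho$-balls also sidesteps the fact that $\rho$-metric balls need not be topologically small.

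The genuine gap is the degenerate case $\mathrm{Area}_\rho(U)=0$, i.e.\ $\rho$ vanishes a.e.\ on $U$, so that \emph{every} $B\Subset U$ has zero $\rho$-area. Then $\mathrm{Area}(\rho_t)=\mathrm{Area}(\rho)-(2t-t^2)\mathrm{Area}_\rho(B)=\mathrm{Area}(\rho)$, the extremal ratio does not improve, and your argument produces no contradiction. This case cannot be dismissed: an allowable conformal metric is only assumed Borel, nonnegative and locally $L_2$, so it may vanish on an open set, and nothing proved earlier excludes this for extremal metrics of general (non-filling) weighted multi-curves. The paper spends roughly half of its proof on exactly this case, and the argument there is of a different character: one passes to a \emph{maximal} neighborhood $V$ avoiding all $\epsilon/2$-near-minimizing representatives (so that, by maximality, such representatives pass through a dense subset of $\overline{V}$), and then runs a Fubini-type argument on the zero-area region (identify a subregion with a rectangle; a.e.\ horizontal line has zero $\rho$-length) to conclude that $p$ lies at $\rho$-distance less than $\epsilon/2$ from points hit by $\epsilon/2$-near-minimizers. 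Detouring such a representative through the given neighborhood of $p$ then costs at most an additional $\epsilon/2$, which establishes ($\ast$) directly in this case rather than by contradiction with extremality. Note that the maximality of $V$ is essential here: a near-minimizer that is far from $U$ in the $\rho$-metric cannot be cheaply rerouted, so one must first locate near-minimizers $\rho$-close to $p$. Your proof needs this case (or a substitute, e.g.\ a proof that an extremal metric cannot vanish a.e.\ on an open set) to be complete.
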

\begin{proof}
Fix $\epsilon>0$. Suppose contrary that there exists a simply connected open neighborhood $p\in V\subset X$ disjoint from any representative $\gamma$ of $c$ with
$L_{\rho}(\gamma)\leq \ell_{\rho}(\gamma)+\epsilon/2$.
We further suppose that $V$ is maximal among such open neighborhoods.
Then, the dense subset of the closure $\bar V$ of $V$ must intersect with some
representative $\gamma$ of $c$ with $L_{\rho}(\gamma)\leq \ell_{\rho}(\gamma)+\epsilon/2$.

First, we suppose that $V$ has $\rho$-area zero.
Then, as the extremal length of curves that touch the boundary and enclose $p$
is finite \cite[Section 4.8]{Ahlfors}, one may suppose that $p$ is contained in a neighborhood $V'\subset V$ such that $L_{\rho}(\partial V')=0$.
By identifying $V'$ with a rectangle, we see that a dense subset of horizontal lines must have length less than $\epsilon/2$ ($V'$ has area zero).
Hence, the $\rho$-distance from any neighborhood of $p\in V''\subset V$ to the boundary $\partial V$ is less than $\epsilon/2$.
Therefore, any neighborhood of $p$ must intersect with some
representative $\gamma$ of $c$ with $L_{\rho}(\gamma)\leq \ell_{\rho}(\gamma)+\epsilon$.

Now, we suppose that $V$ has a positive area.
Then there exists $A\subset V$ with positive area so that any representative $\gamma'$ of any component of $c$ with $\gamma'\cap A\neq \emptyset$ satisfies
$L_{\rho}(\gamma')> \ell_{\rho}(\gamma')+\epsilon/2$.
In this case, we can reduce $\mathrm{Area}(\rho)$ by assigning slightly smaller values in $A$ without changing $\ell_{\rho}(c)$.
This contradicts the assumption that $\rho$ is extremal.
\end{proof}

The following corollary, while independent of the proof of Theorem \ref{thm.extbound},
 is nevertheless worth noting.
 One compares Corollary \ref{coro.hyp} with Remark \ref{rmk.hyp}.
\begin{coro}\label{coro.hyp}
The hyperbolic metric can never be an extremal metric for any weighted multi-curve.
\end{coro}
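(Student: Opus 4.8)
The plan is to derive a contradiction from Lemma \ref{lem.extconf} using the special curvature properties of the hyperbolic metric. Suppose for contradiction that $\rho = \rho_X$, the hyperbolic metric, is an extremal metric for some weighted multi-curve $c = \sum_i a_i \gamma_i$. The key point is that the hyperbolic metric is \emph{real-analytic} and has constant negative curvature, so geodesics for $\rho_X$ are unique in their homotopy class and depend real-analytically on their endpoints; in particular, the geodesic representative of each $\gamma_i$ is a single embedded (or immersed) real-analytic curve, and away from self-intersections it is locally the unique length-minimizer.

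First I would apply Lemma \ref{lem.extconf}: for \emph{every} point $p \in X$ and every small simply connected neighborhood $U \ni p$ and every $\epsilon > 0$, some representative $\gamma$ of some component of $c$ must pass through $U$ with $L_\rho(\gamma) \le \ell_\rho(\gamma) + \epsilon$. Since $c$ has only finitely many components and each $\ell_{\rho_X}(\gamma_i)$ is realized by the (finite-length) closed geodesic in the class of $\gamma_i$, letting $\epsilon \to 0$ forces the \emph{geodesic} representatives of the $\gamma_i$ to be dense in $X$: every open set meets the union $\Gamma := \bigcup_i \gamma_i^{geo}$ of the closed geodesics. But a finite union of closed geodesics on a hyperbolic surface is a finite union of smooth compact $1$-dimensional submanifolds (immersed circles), hence a closed set with empty interior; its complement $X \setminus \Gamma$ is open, nonempty, and in fact has full area. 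This already contradicts density, giving the result.

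An alternative, perhaps cleaner, route avoids the density argument and instead uses the positive-area branch of the proof of Lemma \ref{lem.extconf} directly: pick any open set $A$ disjoint from the finitely many closed geodesics $\gamma_i^{geo}$ (such $A$ exists and has positive area). Every representative $\gamma'$ of a component $\gamma_i$ of $c$ meeting $A$ has $L_{\rho_X}(\gamma') > \ell_{\rho_X}(\gamma_i)$, and by compactness of $A$ and properness of length one gets a uniform gap $L_{\rho_X}(\gamma') > \ell_{\rho_X}(\gamma_i) + \epsilon_0$ for some $\epsilon_0 > 0$; then shrinking $\rho_X$ slightly on $A$ strictly decreases $\mathrm{Area}(\rho_X)$ without decreasing $\ell_{\rho_X}(c)$, contradicting extremality. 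Either way, the mechanism is the same: extremality of $\rho$ forces the dual measure to be "spread everywhere", but hyperbolic geodesics occupy a measure-zero (indeed nowhere dense) set.

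The main obstacle is making precise the claim that representatives of $c$ passing through a fixed region $A$ disjoint from the geodesics have $\rho_X$-length bounded \emph{uniformly} away from the minimum — i.e.\ upgrading the strict inequality $L_{\rho_X}(\gamma') > \ell_{\rho_X}(\gamma_i)$ to $L_{\rho_X}(\gamma') \ge \ell_{\rho_X}(\gamma_i) + \epsilon_0$ with $\epsilon_0$ independent of $\gamma'$. This follows from a standard compactness argument: the space of rectifiable loops in a fixed homotopy class with $\rho_X$-length bounded above is compact (Arzelà–Ascoli) in the uniform topology, $\rho_X$-length is lower semicontinuous, the unique minimizer is the geodesic, and any loop meeting the closed set $A$ stays a fixed distance from the geodesic in the Hausdorff sense — hence has strictly larger length, and the infimum of this excess over the (compact) set of such loops is positive. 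This is exactly the positive-area case already handled inside the proof of Lemma \ref{lem.extconf}, so I would simply invoke that step.
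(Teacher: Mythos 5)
Your proposal is correct and follows essentially the same route as the paper: both rest on the observation that the finitely many closed geodesic representatives of $c$ form a nowhere dense set, so some open set avoids them, and then on the fact that representatives passing through that set have length uniformly bounded away from the minimum, contradicting Lemma \ref{lem.extconf} (equivalently, allowing the area-reducing perturbation from its proof). The only difference is technical: the paper gets the ``near-minimizers stay near geodesics'' step from the Morse lemma for quasi-geodesics, while you obtain the uniform length gap via Arzel\`a--Ascoli compactness and lower semicontinuity of length --- both are valid, and your second (``alternative'') argument is the cleaner way to phrase it, since your first formulation (``letting $\epsilon\to 0$ forces the geodesics to be dense'') needs exactly that uniform-gap step, which you correctly identify and supply.
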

\begin{proof}
Let $c$ be a weighted multi-curve.
The hyperbolic geodesic representative of any free homotopy class of closed curves is unique.
Hence, we always have an open neighborhood $V$ that does not intersect with small neighborhoods of those geodesic representatives of $c$ for small enough $\epsilon>0$. Hence, Lemma \ref{lem.extconf} and Morse Lemma (quasi-geodesics are contained in a neighborhood of geodesics) imply that the hyperbolic metric is not extremal.
\end{proof}

\begin{coro}\label{coro.key}
Let $\rho$ be an extremal metric for a weighted multi-curve $c$.
Suppose that there is a simply connected region $C\subset X$ with $L_{\rho}(\partial C)<\infty$.
Then for any $p\in C$, we have
$$
d_{\rho}(p,\partial C)\leq L_{\rho}(\partial C)/4.
$$
In particular, we have
$$
\mathrm{diam}(C)\leq L_{\rho}(\partial C).
$$
\end{coro}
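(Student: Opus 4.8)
The plan is to produce, through any prescribed neighbourhood of $p$, a near-geodesic representative of some component of $c$, to ``fold it back'' across $\partial C$, and to use extremality of $\rho$ (in the form of Lemma~\ref{lem.extconf}) to bound the length of the fold. We may assume $L_{\rho}(\partial C)<\infty$ (otherwise there is nothing to prove) and that every component of $c$ is essential, since inessential components do not affect $\ell_{\rho}(c)$ and hence do not affect which metrics are extremal. We may also assume $C$ is a Jordan domain whose boundary is a rectifiable Jordan curve.

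Fix $p\in C$ and $\epsilon>0$. Using the hypothesis $L_{\rho}(C)<\infty$, choose a simply connected coordinate disc $U$ with $p\in U\subset C$ and $\mathrm{diam}_{\rho}(U)$ as small as we please; this is exactly the place where finiteness of $C$ (as opposed to merely of $\partial C$) is needed, as it forbids $p$ from being an ``interior point at infinity'' such as the centre in the example $\rho=1/(|z|\log(1/|z|))$. Apply Lemma~\ref{lem.extconf} to $U$ to obtain a representative $\gamma$ of some (essential) component of $c$ with $\gamma\cap U\neq\emptyset$ and $L_{\rho}(\gamma)\le \ell_{\rho}(\gamma)+\epsilon$; note $\ell_\rho(\gamma)<\infty$ by Proposition~\ref{prop.confext}, so $L_\rho(\gamma)<\infty$. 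Since $\overline C$ is simply connected and $\gamma$ is essential, $\gamma\not\subset\overline C$; as $\gamma$ also meets the open set $C$, it must cross $\partial C$. Pick $q\in\gamma\cap U$, parametrize $\gamma$ from $q$, and let $q_{+}$, resp.\ $q_{-}$, be the first point of $\partial C$ reached going forward, resp.\ backward, along $\gamma$. Write $\gamma=\alpha\cup\beta$, with $\alpha\subset\overline C$ the sub-arc from $q_{-}$ through $q$ to $q_{+}$ and $\beta$ the complementary sub-arc. Let $\sigma$ be the shorter of the two sub-arcs of $\partial C$ with endpoints $q_{-},q_{+}$, so $L_{\rho}(\sigma)\le \tfrac12 L_{\rho}(\partial C)$. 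Because $\alpha$ and $\sigma$ lie in the simply connected set $\overline C$ and share their endpoints, $\gamma':=\sigma\cup\beta$ is freely homotopic to $\gamma$, so $\ell_{\rho}(\gamma)\le L_{\rho}(\gamma')=L_{\rho}(\sigma)+L_{\rho}(\beta)$. Combining with $L_{\rho}(\gamma)=L_{\rho}(\alpha)+L_{\rho}(\beta)\le\ell_{\rho}(\gamma)+\epsilon$ and cancelling the finite quantity $L_{\rho}(\beta)$ gives $L_{\rho}(\alpha)\le \tfrac12 L_{\rho}(\partial C)+\epsilon$. Since $\alpha$ runs from $\partial C$ to $q$ and back to $\partial C$, we get $2\,d_{\rho}(q,\partial C)\le d_{\rho}(q_{-},q)+d_{\rho}(q,q_{+})\le L_{\rho}(\alpha)$, hence $d_{\rho}(q,\partial C)\le \tfrac14 L_{\rho}(\partial C)+\tfrac{\epsilon}{2}$, and therefore $d_{\rho}(p,\partial C)\le d_{\rho}(p,q)+d_{\rho}(q,\partial C)\le \mathrm{diam}_{\rho}(U)+\tfrac14 L_{\rho}(\partial C)+\tfrac{\epsilon}{2}$. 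Letting $\mathrm{diam}_{\rho}(U)\to0$ and $\epsilon\to0$ yields $d_{\rho}(p,\partial C)\le \tfrac14 L_{\rho}(\partial C)$.

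For the final assertion, any two points of the Jordan curve $\partial C$ are joined within $\partial C$ by an arc of $\rho$-length at most $\tfrac12 L_{\rho}(\partial C)$, so $\mathrm{diam}_{\rho}(\partial C)\le \tfrac12 L_{\rho}(\partial C)$; the triangle inequality then gives, for $p,p'\in C$, $d_{\rho}(p,p')\le d_{\rho}(p,\partial C)+\mathrm{diam}_{\rho}(\partial C)+d_{\rho}(p',\partial C)\le L_{\rho}(\partial C)$, whence $\mathrm{diam}(C)\le L_{\rho}(\partial C)$. I expect the main obstacle to be the limiting step in the middle paragraph — passing from the uniform bound on $d_{\rho}(q,\partial C)$ for $q$ in small neighbourhoods of $p$ to the same bound at $p$ itself — which genuinely uses $L_{\rho}(C)<\infty$ to exclude pathological interior points. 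A secondary technical point is the regularity of $\partial C$ (that it is a rectifiable Jordan curve, so that the ``shorter sub-arc $\sigma$'' is meaningful, that $\overline C$ is simply connected for the homotopy $\alpha\simeq\sigma$ rel endpoints, and that the first-hitting points $q_\pm$ behave as claimed); in the application to Theorem~\ref{thm.extbound} this is arranged by the construction of $C$.
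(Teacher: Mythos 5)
The proposal is correct and takes essentially the same approach as the paper's proof: it uses Lemma~\ref{lem.extconf} to produce a near-geodesic representative of $c$ passing through an arbitrarily small neighbourhood of $p$, compares the sub-arc crossing $C$ with the shorter arc of $\partial C$ between its endpoints, and finishes with the triangle inequality through $\partial C$. Your explicit fold-back homotopy ($\alpha\simeq\sigma$ rel endpoints in the simply connected $\overline C$) is just a more carefully justified version of the paper's step $L_{\rho}(\gamma')\leq d_{\rho}(a,b)+\epsilon$, and the limiting issue you flag (passing from $q$ near $p$ to $p$ itself) is present, and handled equally implicitly, in the paper's own argument.
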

\begin{proof}
Let $\epsilon>0$ be arbitrarily small.
By Lemma \ref{lem.extconf}, there exists a representative \( \gamma \) of \( c \) that passes arbitrarily close to \( p \) with
\begin{equation}
L_{\rho}(\gamma)\leq \ell_{\rho}(\gamma)+\epsilon. \label{eq.**}
\end{equation}

Consider the connected component \( \gamma' \) of \( \gamma \cap C \) that is closest to \( p \), and let \( a, b \) denote the points \( \gamma' \cap \partial C \).  
By Equation \eqref{eq.**}, we have
\(
L_{\rho}(\gamma') \leq d_{\rho}(a,b) + \epsilon,
\)
and it is clear that
\(
d_{\rho}(a,b) \leq {L_{\rho}(\partial C)}/{2}.
\)
Moreover, since \( \gamma' \) can be chosen arbitrarily close to \( p \), we obtain
\[
2d_{\rho}(p, \partial C) \leq L_{\rho}(\gamma') + \epsilon.
\]

Therefore we have
$$
2d_{\rho}(p,\partial C) \leq L_{\rho}(\gamma')+  \epsilon\leq d_{\rho}(a,b)+2\epsilon\leq L_{\rho}(\partial C)/2 + 2\epsilon.
$$

Since \( \epsilon > 0 \) can be chosen arbitrarily small, we conclude that
\[
d_{\rho}(p, \partial C) \leq {L_{\rho}(\partial C)}/{4}.
\]

Given any two points $p,q\in C$, we have $p_{C},q_{C}\in\partial C$ such that
$$d_{\rho}(p,p_{C}), d_{\rho}(q,q_{C})\leq L_{\rho}(\partial C)/4.$$
Then we can connect $p_{C}, q_{C}$ along $\partial C$ with length at most $L_{\rho}(\partial C)/2$.
Hence we have $d_{\rho}(p,q)\leq d_{\rho}(p,p_{C})+d_{\rho}(p_{C},q_{C})+d_{\rho}(q,q_{C})\leq L_{\rho}(\partial C)$.
\end{proof}

Now consider a pants decomposition $\Pi$ of $X$.
We suppose that all the cuffs of pairs of pants in $\Pi$ are hyperbolic geodesics and hence determined solely by $X\in\T$.
\begin{defi}
Let $\{\Gamma_{1},\cdots, \Gamma_{3g-3}\}$ denote the family of closed geodesics which are cuffs of $\Pi$, and
$P_{1},\dots, P_{2g-2}$ denote the set of pairs of pants in $\Pi$.
Let $P_{ij}^{k}$ be the hyperbolic surface obtained by gluing $P_{i}$ and $P_{j}$ along the cuff $\Gamma_{k}$. If $P_{i}$ and $P_{j}$ do not share $\Gamma_{k}$ 
we set $P_{ij}^{k} = P_{i}\sqcup P_{j}$.
We denote by $\delta^{k}_{l,m}$ the shortest hyperbolic geodesic connecting $\Gamma_{l} \text{ and }\Gamma_{m}$ in $P_{i,j}^{k}$.
Then, let $\Gamma^{k}_{l,m}$ denote the family of arcs homotopic to $\delta^{k}_{l,m}$ relative to $\Gamma_{l} \text{ and }\Gamma_{m}$.
We define $D'$ by
$$
D':=\max\{\mathrm{Ext}_{P_{i}}(\Gamma_{j}), 
\mathrm{Ext}_{P_{ij}^{k}}(\Gamma^{k}_{l,m})\}
$$
where 
\begin{itemize}
\item
if $\Gamma_{j}$ is not a cuff of $P_{i}$, we set $\mathrm{Ext}_{P_{i}}(\Gamma_{j})=0$.
\item 
If $\Gamma^{k}_{l,m}$ does not have arcs contained in $P_{ij}^{k}$, we set $\mathrm{Ext}_{P_{ij}^{k}}(\Gamma^{k}_{l,m})=0$
(Note that the extremal length of seams in each $P_{i}$ is also taken into consideration here).
\end{itemize}
\end{defi}
The constant $D'$ depends only on $X$, as we consider curve families and pairs of pants determined only by the hyperbolic geometry of $X$.

\begin{proof}[Proof of Theorem \ref{thm.extbound}.]
Let $\rho$ be a conformal metric of area $1$ on $X$.
Let us first fix a pair of pants $P_{i}$ from $\Pi$.
By the definition of extremal length and of $D'$, there is a curve $\gamma_{j}\subset P$ homotopic to $\Gamma_{j}$ such that
\begin{align}
&\frac{L_{\rho}(\gamma_{j})^{2}}{\mathrm{Area}(\rho|P_{i})}\leq D'\Longrightarrow L_{\rho}(\gamma_{j})\leq \sqrt{D'}\label{eq.ext1}
\end{align}
Similarly, there is a curve $ \gamma_{\ell,m}^{k}\subset P_{i,j}^{k}$ homotopic to an arc in $\Gamma^{k}_{l,m}$ relative to the boundary such that
\begin{align}
&\frac{L_{\rho}( \gamma_{\ell,m}^{k})^{2}}{\mathrm{Area}(\rho|P_{i,j}^{k})}\leq D'\Longrightarrow L_{\rho}(\gamma_{\ell,m}^{k})\leq \sqrt{D'}\label{eq.ext2}
\end{align}

We choose $\gamma_{i}$ and $\gamma_{\ell,m}^{k}$ to be $\rho$-geodesics.
Then, we will decompose $X$ by using curves $\gamma_{i}$ and $ \gamma_{\ell,m}^{k}$ (Note that these curves depend on $\rho$).

\begin{figure}[ht]
\includegraphics[width=5cm]{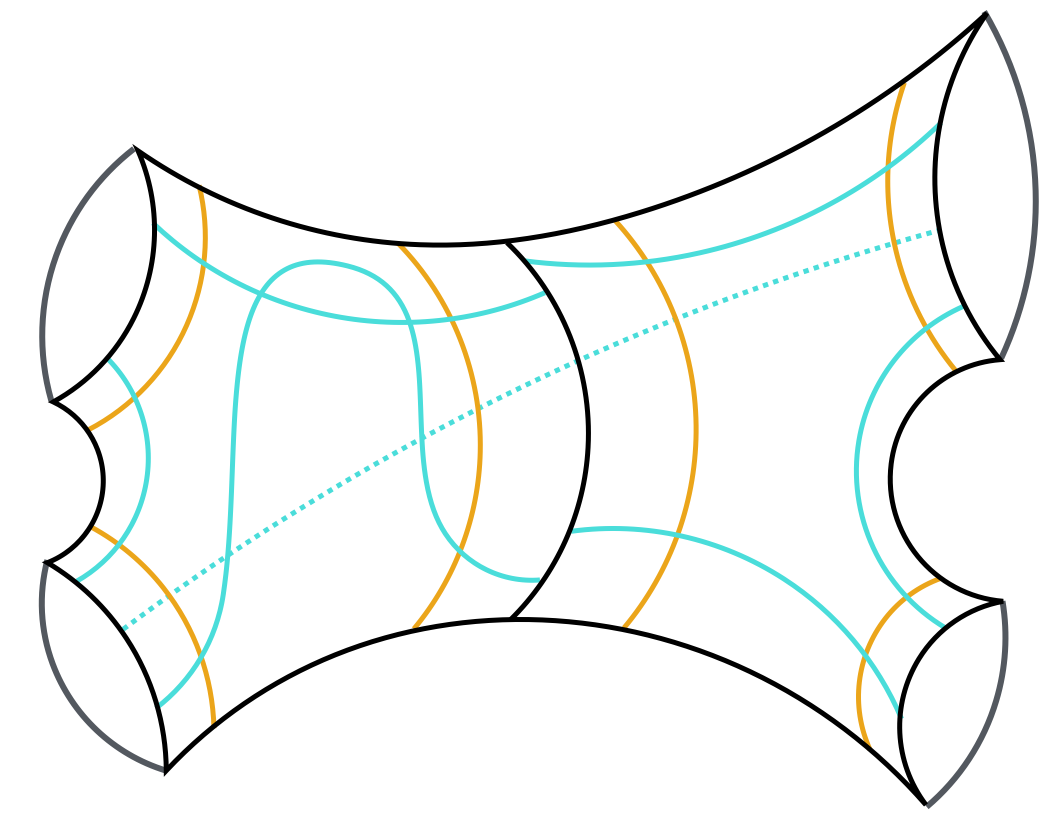}
\caption{The cuffs $\gamma_{i}$'s and seams $ \gamma_{\ell,m}^{k}$'s on $X$.}
\label{fig.decomp}
\end{figure}

All the situations we must consider are depicted in Figure \ref{fig.decomp}.
As we see in the pair of pants on the left side of Figure \ref{fig.decomp}, 
cuffs $\gamma_{i}$'s and seams $ \gamma_{\ell,m}^{k}$'s can intersect each other.
Nevertheless, the curves $\gamma_{i}$'s and arcs $ \gamma_{\ell,m}^{k}$'s decompose the complement of $\gamma_{i}$'s to simply connected regions.
On each pair of pants, the total length of boundaries of those simply connected regions is bounded from above by $9\sqrt{D'}$ (cuffs are used once, seams are used twice, 
and all the cuffs and seams have length $<\sqrt{D'}$).
Then by Corollary \ref{coro.key}, we see that if $\rho$ is extremal, the diameter is bounded from above by $9\sqrt{D'}$.

Similarly, we may cut each annular region around the cuffs of $P_{i}$ 's (in the middle of Figure \ref{fig.decomp}) by a path of length less than $\sqrt{D'}$ (dotted line is one of $\gamma_{\ell,m}^{k}$'s).
Hence, this annular region is decomposed into simply connected regions whose total perimeter, and therefore the diameter, are bounded above by \( 4\sqrt{D'} \).

There are $2g-2$ pairs of pants and $3g-3$ annuli.
Therefore, we have
\begin{equation}
\mathrm{diam}_{\rho}(X)\leq (2g-2)\cdot 9\sqrt{D'} + (3g-3)\cdot 4\sqrt {D'} = 30(g-1)\sqrt{D'}.\label{eq.diam}
\end{equation}
\end{proof}

\section*{Acknowledgement}
The author would like to thank ChatGPT, Chris Leininger, D\'idac Mart\'inez-Granado, Sadayoshi Kojima, Greg McShane, Ryo Matsuda, Hideki Miyachi, and Toshiyuki Sugawa for their helpful conversations.
The author also thanks the anonymous referee for the careful reading and useful suggestions.
\bibliographystyle{alpha} 
\bibliography{references} 
\end{document}